\theoremstyle{plain}
\newtheorem{thm}{\protect\theoremname}[section]
  \theoremstyle{plain}
  \newtheorem*{question*}{\protect\questionname}
  \theoremstyle{plain}
  \newtheorem{cor}[thm]{\protect\corollaryname}
  \theoremstyle{plain}
  \newtheorem{question}[thm]{\protect\questionname}
  \theoremstyle{remark}
  \newtheorem*{acknowledgement*}{\protect\acknowledgementname}
  \theoremstyle{plain}
  \newtheorem{lem}[thm]{\protect\lemmaname}
  \theoremstyle{definition}
  \newtheorem{defn}[thm]{\protect\definitionname}
  \theoremstyle{plain}
  \newtheorem{prop}[thm]{\protect\propositionname}
\newcommand{\lyxaddress}[1]{
\par {\raggedright #1
\vspace{1.4em}
\noindent\par}
}
  \providecommand{\acknowledgementname}{Acknowledgement}
  \providecommand{\corollaryname}{Corollary}
  \providecommand{\definitionname}{Definition}
  \providecommand{\lemmaname}{Lemma}
  \providecommand{\propositionname}{Proposition}
  \providecommand{\questionname}{Question}
\providecommand{\theoremname}{Theorem}
\begin{document}

\title{Fluctuations of ergodic averages for amenable group actions}

\author{Uri Gabor\thanks{Supported by ERC grant 306494 and ISF grant 1702/17}}
\maketitle
\begin{abstract}
We show that for any countable amenable group action, along Følner
sequences that have for any $c>1$ a two sided $c$-tempered tail,
one have universal estimate for the probability that there are $n$
fluctuations in the ergodic averages of $L^{\infty}$ functions, and
this estimate gives exponential decay in $n$. Any two-sided Følner
sequence can be thinned out to satisfy the above property, and in
particular, any countable amenable group admits such a sequence. This
extends results of S. Kalikow and B. Weiss \cite{key-1} for $\mathbb{Z}^{d}$
actions and of N. Moriakov \cite{key-2} for actions of groups with
polynomial growth.
\end{abstract}

\section{Introduction}

A real valued sequence is said to \textit{fluctuate $N$ times across
a gap $(\alpha,\beta)$}, if there are integers $n_{1}<n_{2}<...<n_{2N}$
s.t. for odd $i$, $a_{n_{i}}\leq\alpha$, and for even $i$, $a_{n_{i}}\geq\beta$.
Let $(X,\mu,\mathscr{B},(T_{g})_{g\in G})$ be a measure preserving
action of a countable amenable group $G$, and fix some (\textcolor{black}{left})
Følner sequence $(F_{n})$ in $G$. For any $N,$ we define the set
$D_{N}$ by:
\[
D_{N}=D_{(F_{n}),f,\alpha,\beta,N}=\{x\,:\,\text{\ensuremath{\mathsf{A}_{n}f(x)} fluctuates across \ensuremath{(\alpha,\beta)} at least \ensuremath{N} times}\}
\]
where $\mathsf{A}_{n}f=\frac{1}{|F_{n}|}\sum_{g\in F_{n}}f\circ T_{g}$
denotes the sequence of ergodic averages of a function $f$ on $X$
along $(F_{n})$. In \cite{key-1} it was shown that for $G=\mathbb{Z}^{d}$
and $F_{n}=[-n,n]^{d}$, the following holds:
\begin{thm}
\label{thm:Z^d fluctuation}For any $0<\alpha<\beta$, there are constants
$0<c_{0}<1$ and $c_{1}>0$, s.t. for every m.p.s. $(X,\mathscr{B},\mu,\{T_{g}\}_{g\in G})$
and every measurable $f\geq0$, one has: 
\[
\mu(D_{N})\leq c_{1}c_{0}^{N}\quad(\forall N)
\]
.
\end{thm}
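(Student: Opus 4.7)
The plan is to prove a one-step recursion $\mu(D_N)\le c_0\,\mu(D_{N-1})$ with $c_0=c_0(\alpha,\beta,d)<1$, together with $\mu(D_1)\le 1$; iterating gives $\mu(D_N)\le c_0^{N-1}$, the claim with $c_1=1/c_0$. A transference-type argument first reduces the abstract measure-theoretic inequality to a combinatorial statement on $\mathbb{Z}^d$: it suffices to bound, uniformly in bounded $\phi\colon\mathbb{Z}^d\to\mathbb{R}_{\ge 0}$ and in a sufficiently large cube $B\subset\mathbb{Z}^d$, the density in $B$ of lattice points $y$ whose cubic averages $\phi_n(y)=|F_n|^{-1}\sum_{g\in F_n}\phi(y+g)$ fluctuate at least $N$ times across $(\alpha,\beta)$. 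For each such $y$, I would introduce interlaced stopping scales $\tau_1(y)<\sigma_1(y)<\tau_2(y)<\cdots$, where $\tau_k$ is the $k$-th scale at which $\phi_n(y)\le\alpha$ and $\sigma_k$ is the first scale after $\tau_k$ at which $\phi_n(y)\ge\beta$.

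Since $F_{\tau_N(y)}\subset F_{\sigma_N(y)}$, the shell $F_{\sigma_N}\setminus F_{\tau_N}$ translated by $y$ carries $\phi$-sum at least $\beta|F_{\sigma_N}|-\alpha|F_{\tau_N}|$. When the two scales are \emph{doubling-separated}, i.e.\ $|F_{\tau_N}|/|F_{\sigma_N}|\le\tfrac{1}{2}$, the resulting shell-average exceeds $2\beta-\alpha$, well above $\beta$. A Vitali selection applied to the cover $\{y+F_{\sigma_N(y)}\}_{y\in D_N\cap B}$ (using the bounded doubling constant $5^d$ of cubes in $\mathbb{Z}^d$) would extract a disjoint subcollection whose shells cover a definite fraction $\kappa(d)>0$ of $D_N\cap B$. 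Summing the $\phi$-mass on these disjoint shells and comparing with the $(N-1)$-st step analogue (where the mass on the $F_{\tau_N}$-sized cubes is already constrained by the induction) should then yield the recursion.

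The main obstacle is exactly the doubling-separation hypothesis: consecutive cubes in $\mathbb{Z}^d$ satisfy $|F_{n+1}|/|F_n|\to 1$, so the pairs $(\tau_N,\sigma_N)$ produced by the stopping-time definition need not be doubling-separated. I expect the proof therefore has to proceed in two tiers. First, partition the scale axis into dyadic blocks $[2^j,2^{j+1})$ and use a pigeonhole argument together with the uniform ratio bound $|F_{n+1}|/|F_n|\le C(d)$ to show that each dyadic block can account for only boundedly many fluctuations---absorbing these at an acceptable cost into the constant $c_1$. Only then is the Vitali argument applied to the dyadic (doubling) scales to obtain the genuine geometric decay. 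This two-tier structure---doubling scales for the covering argument and bounded-ratio adjacent scales for pigeonhole absorption---is precisely what the paper's main theorem abstracts into the two-sided tempered Følner condition in the general amenable setting.
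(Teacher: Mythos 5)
There is a genuine gap. You have assembled the right ingredients (transference to a window, stopping scales, a Vitali-type selection, and the mass comparison between scales where the average is $\le\alpha$ and scales where it is $\ge\beta$), but the decisive mechanism that produces exponential decay is missing. In the Kalikow--Weiss argument (and in this paper's Proposition \ref{lem:Key1 Lemma} and Lemma \ref{lem:inside proof}, which generalize it), one does not perform a single covering at the top scale $\sigma_N$ and then appeal to an induction on $N$; rather, one \emph{alternates}: cover a large fraction of the bad set by an almost-disjoint family of cubes on which the average is $\ge\beta$, then use the next block of downcrossings to cover (essentially) the \emph{union} of those cubes by an almost-disjoint family of cubes on which the average is $\le\alpha$. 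Since $f\ge0$, the total mass of $f$ on the first union is $\gtrsim\beta$ times its volume while the mass on the second union is $\lesssim\alpha$ times its volume, and the second union essentially contains the first; hence the union must grow by a factor $\approx\beta/\alpha>1$ at each alternation. Iterating $\sim N/2q$ times inside the fixed window forces the starting set to have density $\le(1+\delta/2)^{-N/2q+O(1)}$. Your final sentence --- ``comparing with the $(N-1)$-st step analogue \ldots should then yield the recursion'' --- is exactly where this factor-per-fluctuation gain has to be produced, and no argument is given; the induction hypothesis $\mu(D_{N-1})\le c_0^{N-1}$ is a measure bound on centers and does not control the $\phi$-mass sitting on the selected cubes or shells. (Indeed, neither Kalikow--Weiss nor this paper proves a one-step recursion $\mu(D_N)\le c_0\,\mu(D_{N-1})$; the bound is obtained in one pass through all $N$ fluctuations.)

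Two secondary points. First, your shell computation is off: with $t=|F_{\tau_N}|/|F_{\sigma_N}|$ the shell average is $\frac{\beta-\alpha t}{1-t}$, which for $0\le t\le\frac12$ lies between $\beta$ and $2\beta-\alpha$; it does not \emph{exceed} $2\beta-\alpha$, though it does always exceed $\beta$ --- and for that conclusion no doubling separation is needed at all. Second, the doubling-separation/dyadic-block tier is therefore a red herring here: the place where the scale structure genuinely enters is the $\epsilon$-disjointification (Vitali) step, i.e.\ the temperedness of the sequence of cubes, which is automatic for $F_n=[-n,n]^d$. (Your pigeonhole claim that a dyadic block of scales carries boundedly many fluctuations does happen to be true for $f\ge0$, since each downcrossing forces the cube volume to grow by $\beta/\alpha$, but it is not needed.) The two-sidedness and strong temperedness hypotheses in the paper's general theorem are there to make the disjointification and the re-centering step $F_n g\mapsto F_n c(g)$ work for arbitrary F{\o}lner sequences, not to repair a failure of doubling in the shell estimate.
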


In \cite{key-2} this result was extended to measure preserving actions
of groups of polynomial growth, where the fixed Følner sequence is
taken to be balls of increasing radii, that is, $F_{n}=S^{n}$ where
$S$ is a finite symmetric set of generators which contains the unit.

The aim of this paper is to extend these results to general actions
of amenable groups. In this context, the notion of temperedness is
of importance: A sequence $(F_{n})$ is \textit{left $c$-tempered}
if for all $n$, 
\[
\left|\bigcup_{i<n}F_{i}^{-1}F_{n}\right|\leq c\left|F_{n}\right|,
\]
\textit{\textcolor{black}{right $c$-tempered}} if for all $n$,
\[
\left|\bigcup_{i<n}F_{n}F_{i}^{-1}\right|\leq c\left|F_{n}\right|,
\]
and \textit{$c$-bi-tempered} if it is both left and right $c$-tempered.
In this paper, a sequence that for any $c>1$ has some tail which
is $c$-bi-tempered, will be called \textit{\textcolor{black}{strongly
tempered}}. Notice that any two-sided Følner sequence can be thinned
out to be strongly tempered.

The class of tempered Følner sequences is the most general class of
sequences which are known to satisfy the pointwise ergodic theorem
\cite{key-16,key-15}. That is, the averages along any (left) tempered
Følner sequence of any integrble function converges a.e. Consequently,
if the fixed Følner sequence $\left(F_{n}\right)$ is tempered, then
for any $\alpha<\beta$ and any integrable function $f$, the measure
of $D_{N}=D_{(F_{n}),f,\alpha,\beta,N}$ decreases to zero as $N\rightarrow\infty$.
Thus, along such sequences, one might hope to have some control on
the rate of $\mu(D_{N})$, as in Theorem \ref{thm:Z^d fluctuation}:
\begin{question*}
Does every amenable group have a Følner sequence that satisfies (in
some sense) Theorem \ref{thm:Z^d fluctuation}? Can one find for any
Følner sequence a subsequence with this property?
\end{question*}
Our main result is the following theorem and its corollary, which
says that one can successfully bound the rate of decrease of $\mu(D_{N})$
in any amenable group, provided that $f$ is bounded, and that the
averages are taken along strongly tempered Følner sequences.
\begin{thm}
\label{thm:Amn Flucts}For any $\alpha<\beta$ and $S>0$, there exist
$\lambda>0$ and $0<c_{0}<1$, s.t. for any $(1+\lambda)$-bi-tempered
Følner sequence $\left(F_{n}\right)$, any m.p.s. $(X,\mu,\mathscr{B},(T_{g})_{g\in G})$
and any $f\in L_{\mu}^{\infty}(X)$ with $||f||_{\infty}\leq S$,
one has 
\[
\mu(D_{N})\leq c_{1}c_{0}^{N}\quad(\forall N)
\]
for some $c_{1}>0$ which depends only on the sequence $\left(F_{n}\right)$
(and neither on the m.p.s. nor on the function $f$).
\end{thm}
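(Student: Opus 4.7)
The plan is to adapt the Kalikow--Weiss strategy from \cite{key-1} to the amenable setting, replacing the Euclidean Vitali covering by a covering argument based on the $(1+\lambda)$-bi-tempered hypothesis, in the spirit of Lindenstrauss's proof of the pointwise ergodic theorem \cite{key-16}. The target is a multiplicative recursion
\[
\mu(D_N) \le c_0 \, \mu(D_{N-1})
\]
with $c_0 = c_0(\alpha,\beta,S) < 1$ independent of the system and the function, from which $\mu(D_N) \le c_1 c_0^N$ follows by iteration; the constant $c_1$ absorbs boundary contributions from small scales and hence depends on $(F_n)$ but not on $f$ or the action.

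First I would partition $D_N$ by a stopping time: for $x \in D_N$ let $n(x)$ be the smallest scale at which the $N$-th $(\alpha,\beta)$-fluctuation of $(\mathsf{A}_n f(x))_n$ is completed, so that on the level set $E_n = \{x : n(x) = n\}$ the average $\mathsf{A}_n f(x)$ lies outside $(\alpha,\beta)$. The geometric heart of the argument should be the claim that if $x \in E_n \subseteq D_N$, then a definite fraction of the orbit-points $T_g x$ for $g \in F_n$ must lie outside $D_{N-1}$ (their $(N-1)$-st fluctuation is not yet complete). This is an $L^\infty$ \emph{mass-budget} statement: having $\mathsf{A}_n f(x) \ge \beta$ while many nearby orbit-points already carry an earlier average $\le \alpha$ costs integrated mass, and the total mass over $F_n$ is capped by $S|F_n|$; the gain $(\beta-\alpha)/S$ is the source of $c_0$.

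Next I would convert the local statement into a global one via a Vitali-type selection: using left $(1+\lambda)$-temperedness, extract from $\{F_{n(x)}^{-1} x : x \in D_N\}$ an almost-disjoint subfamily whose union covers a fraction of $D_N$ tending to $1$ as $\lambda \downarrow 0$. Right $(1+\lambda)$-temperedness enters in the converse direction, to track how fluctuations propagate across the orbit under right translations; this is precisely where the bi-tempered (rather than merely left-tempered) hypothesis is essential, since the Vitali selection uses the left-tempered products $F_i^{-1}F_n$ while the orbit-point transfer uses the right products $F_n F_i^{-1}$. Summing the local contraction over the disjoint cover produces the desired recursion.

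The main obstacle will be the quantitative coupling of $\lambda$ to $c_0$: each invocation of the tempered hypothesis costs a multiplicative factor $(1+\lambda)$, while the mass-budget step buys a fixed gain of order $(\beta-\alpha)/S$, so $\lambda$ must be chosen small enough (as a function of $\alpha,\beta,S$) for the net ratio to remain strictly below $1$. A secondary, mostly bookkeeping, issue is handling measurability of the stopping-time partitions and of the Vitali selection in the non-commutative setting, and accounting for the small-scale contributions that ultimately populate the boundary constant $c_1$.
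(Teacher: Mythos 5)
Your plan contains several of the right ingredients (the $L^\infty$ mass-budget across the gap, a Vitali-type covering extracted via temperedness, and the observation that left and right temperedness play different roles), but the central inductive mechanism you propose is not substantiated and is not the one that works. The claim at the ``geometric heart'' of your argument --- that if $x$ completes its $N$-th fluctuation at scale $n$ then a definite fraction of the orbit points $T_g x$, $g\in F_n$, must lie outside $D_{N-1}$ --- has no justification: the fluctuation count of $T_gx$ is governed by the averages of $f$ over the translated sets $F_m g$, which are not the averaging sets of $x$, so nothing forces these points to have fewer completed fluctuations. Consequently the one-step recursion $\mu(D_N)\le c_0\,\mu(D_{N-1})$ is not established, and in fact the known arguments do not produce a gain per single fluctuation. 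What the paper does instead is transfer the problem to a finite, very invariant window $\Omega\subset G$: for fixed $x$ it considers $C=\{c:\ cx\in D_{N,M}\}$ and builds, by repeatedly applying an effective Vitali covering lemma (which requires associating to each point a subsequence of \emph{length} $q$, with $q\gtrsim \epsilon^{-2}$), a chain of $\epsilon$-disjoint collections whose unions $C_1,C_2,\dots$ grow by a fixed factor $(1+\delta/2)$ at each step; the growth factor comes from comparing the $f$-mass of an $\epsilon$-disjoint family of sets with averages $\ge\beta$ against a covering family with averages $\le\alpha$. Each step consumes $q$ upcrossings and $q$ downcrossings, so after $\lfloor N/2q\rfloor$ steps the union has size $(1+\delta/2)^{N/2q-O(1)}|C|$, yet is trapped inside $\Omega$; this, plus the transference principle, yields $\mu(D_N)\le c_1c_0^N$ with $c_0=(1+\delta/2)^{-1/2q}$. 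Note that the exponential rate therefore depends on the covering parameter $q$, not merely on one application of the mass budget.

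Two further points where your outline needs repair. First, your Vitali selection is performed on the family $\{F_{n(x)}^{-1}x: x\in D_N\}$, i.e.\ on subsets of $X$; the covering lemma is a combinatorial statement about finite subsets of the group, so one must first transfer to a window in $G$ (or use a tower), which your plan omits. Second, the bi-tempered hypothesis cannot be used directly in the covering argument: the paper first passes (Proposition \ref{prop:Let-.-For}) to a tail of $(F_n)$ that is ``$\lambda'$-good,'' meaning $\bigl|\bigcup_{i<n}F_i^{-1}F_n\setminus F_n\bigr|\le\lambda'|F_n|$ and $|F_n\setminus F_nf|<\lambda'|F_n|$ for $f\in F_i$, $i<n$; the first condition drives the covering lemma and the second is used to replace $F_n g$ by $F_n c(g)$ so that the selected sets really carry ergodic averages at orbit points of $x$. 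The dependence of $c_1$ on $(F_n)$ comes precisely from discarding the initial segment before the good tail, not from ``small-scale boundary contributions'' in your sense.
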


If $\left(F_{n}\right)$ is strongly tempered, then for any gap $(\alpha,\beta)\subset\mathbb{R}$
and any $S>0$, some tail of the sequence, say $\left(F_{n}\right)_{n>n_{0}}$,
satisfies the hypothesis of Theorem \ref{thm:Amn Flucts}, while the
first $n_{0}$ elements of $\left(F_{n}\right)$ atribute at most
$O(n_{0})$ fluctuations. Thus, enlarging $c_{1}$ depending on that
$n_{0}$, we get:
\begin{cor}
\label{cor:Let--be}Let $\left(F_{N}\right)$ be a strongly tempered
Følner sequence. For any $\alpha<\beta$ and $S>0$, there exist $0<c_{0}<1$
and $c_{1}>0$, s.t. for any m.p.s. $(X,\mu,\mathscr{B},(T_{g})_{g\in G})$
and any $f\in L_{\mu}^{\infty}(X)$ with $||f||_{\infty}\leq S$,
one has 
\[
\mu(D_{N})\leq c_{1}c_{0}^{N}\quad(\forall N)
\]
\end{cor}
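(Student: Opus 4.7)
The plan is to deduce the corollary directly from Theorem \ref{thm:Amn Flucts} by passing to a tempered tail of the F\o lner sequence. Given $\alpha<\beta$ and $S>0$, first invoke Theorem \ref{thm:Amn Flucts} to extract the parameters $\lambda>0$ and $c_0\in(0,1)$ corresponding to this data. Since $\left(F_{N}\right)$ is strongly tempered, the definition furnishes an index $n_0$ such that the tail $\left(F_{N}\right)_{N>n_{0}}$ is $(1+\lambda)$-bi-tempered. Applying Theorem \ref{thm:Amn Flucts} to this tail (for an arbitrary m.p.s.\ and any $f$ with $\|f\|_\infty\le S$) yields a constant $c_1'>0$, depending only on the tail and hence only on $(F_N)$, such that the set $\widetilde D_N$ of points for which the subsequence $(\mathsf{A}_n f)_{n>n_0}$ fluctuates at least $N$ times across $(\alpha,\beta)$ satisfies $\mu(\widetilde D_N)\le c_1' c_0^N$ for all $N$.

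The remaining step is the elementary comparison between $D_N$ and $\widetilde D_N$. If $x\in D_N$, then the full sequence $(\mathsf{A}_n f(x))_n$ admits $2N$ witness indices $n_1<n_2<\cdots<n_{2N}$ alternating below $\alpha$ and above $\beta$. Discarding the first $n_0$ indices kills at most $n_0$ of the witnesses; grouping the original witnesses into the $N$ consecutive pairs $(n_{2i-1},n_{2i})$, at most $n_0$ pairs can be damaged, so at least $N-n_0$ pairs remain intact in the tail and witness at least $N-n_0$ fluctuations of the restricted sequence. In other words, $D_N\subseteq \widetilde D_{N-n_0}$ for $N>n_0$.

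Combining the two bounds gives $\mu(D_N)\le c_1' c_0^{N-n_0}=(c_1' c_0^{-n_0})\,c_0^N$ for all $N>n_0$, while for $N\le n_0$ the trivial bound $\mu(D_N)\le 1\le c_0^{-n_0}c_0^N$ is available. Setting $c_1:=\max\{1,c_1'\}\,c_0^{-n_0}$, which depends only on $(F_N)$, yields the claimed inequality $\mu(D_N)\le c_1 c_0^N$ for every $N$. I do not expect any real obstacle here: the only nontrivial ingredient is Theorem \ref{thm:Amn Flucts} itself, and the passage from a tempered tail to the whole sequence is a routine bookkeeping argument once the above combinatorial observation about removing finitely many terms is recorded.
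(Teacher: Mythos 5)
Your proposal is correct and follows essentially the same route as the paper: the paper likewise applies Theorem \ref{thm:Amn Flucts} to a $(1+\lambda)$-bi-tempered tail $\left(F_{n}\right)_{n>n_{0}}$ furnished by strong temperedness, observes that the discarded initial segment accounts for at most $n_{0}$ fluctuations (i.e.\ $D_{N}\subseteq\widetilde{D}_{N-n_{0}}$), and absorbs the loss into $c_{1}$ via a factor $c_{0}^{-n_{0}}$. Your pairing argument just makes explicit the paper's remark that the first $n_{0}$ elements contribute $O(n_{0})$ fluctuations.
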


As the proof of Theorem \ref{thm:Amn Flucts} indicates, the bi-temperedness
condition could be slightly relaxed, and was chosen for the clarity
of presentation. In addition, the dependency of $c_{1}$ on the sequence
$\left(F_{n}\right)$ could be replaced by restricting the theorem
to sequences with some certain properties. For example, assuming $e\in F_{1}$
would be enough for determine $c_{1}$, regardless of what $\left(F_{n}\right)$
is.

In contrast to Theorem \ref{thm:Amn Flucts}, we show that the temperedness
property (with any fixed $c>1$) alone, isn't enough to bound the
rate of decrease of $\mu(D_{N})$ for any given gap $(\alpha,\beta)$.
More precisely, we show that in any measure preaserving $\mathbb{Z}$-action
$(X,\mu,\mathscr{B},\{T^{n}\}_{n\in\mathbb{Z}})$ one has the following:
\begin{thm}
\label{prop:counterexample}Let $(X,\mathscr{B},\mu,\{T^{n}\}_{n\in\mathbb{Z}})$
be a m.p.s. and let $\omega(n)\searrow0$ be any sequence which decreases
to 0. For any $\lambda>0$, there are some $\alpha<\beta$, a bounded
function $0\leq f\leq1$ and a $(1+\lambda)$-tempered Følner sequence
$\left(F_{n}\right)$, for which $\mu(D_{(F_{n}),f,\alpha,\beta,N})>\omega(N)$
for all but finitely many $N$.
\end{thm}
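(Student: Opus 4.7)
My plan is to give a hierarchical Rokhlin-tower construction on the aperiodic part of $(X,T)$; on any periodic part, ergodic averages stabilise after at most one period and contribute nothing to $D_N$ for large $N$, so I assume without loss of generality that $T$ is aperiodic. Given $\lambda>0$, I take an integer $M\ge 2/\lambda$ and set $F_n=[0,M^n)\subset\mathbb{Z}$. The identity $\bigcup_{i<n}(F_n-F_i)=F_n-\bigcup_{i<n}F_i$ gives $\bigl|\bigcup_{i<n}(F_n-F_i)\bigr|=M^n+M^{n-1}-1\le(1+\lambda)|F_n|$, so $(F_n)$ is $(1+\lambda)$-tempered.

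I then fix a scale-adapted profile $g:\mathbb{N}\to\{0,1\}$ by setting $g=\mathbf{1}_E$ with $E=\bigsqcup_{j\ge 0}[M^{2j},M^{2j+1})$. A direct summation shows that the scale averages $u_n:=|F_n|^{-1}\sum_{k\in F_n}g(k)$ alternate between approximately $1/(M+1)$ for even $n$ and approximately $M/(M+1)$ for odd $n$; fixing $\alpha<\beta$ strictly inside this gap yields a deterministic sequence with arbitrarily many fluctuations across $(\alpha,\beta)$.

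The profile is transferred to $X$ by Rokhlin's lemma: applying the lemma successively to the complement of the towers chosen so far, I build a pairwise disjoint family $(\mathcal{T}_N)_N$ of Rokhlin towers in $X$ whose heights $H_N$ grow geometrically in $N$ and whose total measures $(m_N)$ are chosen so that the summed measure of the ``good'' sublevels of $\mathcal{T}_N$ is at least $\omega(N)$. On each $\mathcal{T}_N$ I set $f(T^jb)=g(j)$ for $b$ in the base and $0\le j<H_N$, and on the leftover set $X\setminus\bigsqcup_N\mathcal{T}_N$ I define $f$ arbitrarily in $[0,1]$. For $x=T^jb$ at sufficiently low levels of $\mathcal{T}_N$, the averaging windows $F_n=[0,M^n)$ with $n\le 2N$ all lie inside $\mathcal{T}_N$, so $\mathsf{A}_nf(x)$ equals the scale-$n$ average of $g$ starting from the shifted position $j$; a shift-by-$j$ analysis of $g$ then confirms that this sequence fluctuates at least $N$ times across $(\alpha,\beta)$.

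The principal obstacle is the geometric growth of $|F_n|$ forced by $(1+\lambda)$-temperedness: the Cauchy--Davenport estimate $|F_n-F_{n-1}|\ge|F_n|+|F_{n-1}|-1$ combined with the temperedness inequality forces $|F_n|\ge(|F_{n-1}|-1)/\lambda$, so a single Rokhlin tower exhibiting $N$ fluctuations must have height at least $\asymp(1/\lambda)^{2N}$ and hence a correspondingly small base of measure $\le\lambda^{2N}$. To beat a slowly decaying $\omega$ one cannot rely on the base alone; the delicate step is to show that sufficiently many non-base levels of each $\mathcal{T}_N$ also produce the required $N$ fluctuations, by exploiting the multi-scale behaviour of $g$ under shifts by $j$. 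Balancing the heights $H_N$ against the masses $m_N$ so that the total allocation remains summable while dominating $\omega$ is the combinatorial core of the argument.
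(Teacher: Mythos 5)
Your outline correctly identifies the decisive difficulty in its final paragraph --- that the base of a single tower is far too small and one must get a definite proportion of the non-base levels into $D_N$ --- but it does not resolve it, and with your specific choices of $F_n$ and $g$ it cannot be resolved. Take $x=T^jb$ at level $j$ of a tower carrying the profile $g=\mathbf{1}_E$, $E=\bigsqcup_i[M^{2i},M^{2i+1})$. If $M^{2i}\le j<M^{2i+2}$, then for every scale $n\le 2i$ the window $[j,j+M^n)$ lies (up to a boundary effect) inside a single block of $E$ or its complement, so $\mathsf{A}_nf(x)$ is near $0$ or near $1$ and produces no crossing of a gap interior to $(1/M,1-1/M)$; the alternation between $\approx 1/M$ and $\approx 1-1/M$ only begins at scales $n\ge 2i+2$. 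Hence level $j$ witnesses at most $\tfrac12(\log_M H_N-\log_M j)+O(1)$ fluctuations, and the set of levels witnessing $N$ of them has density at most about $M^{-2N}$ in the tower. Summing over your disjoint towers gives $\mu(D_N)\lesssim M^{-2N}$ --- exponential decay, which is the opposite of what the theorem requires and cannot dominate a slowly decaying $\omega$. No balancing of heights $H_N$ against masses $m_N$ can repair this, because the deficiency is a \emph{per-level density} bound, not a mass-allocation issue.

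The missing idea, which is the heart of the paper's construction, is to make the fluctuation mechanism robust under the shift by the level index $j$. The paper does this by taking the test function to be \emph{periodic}, $\phi_l=\mathbf{1}_{2l\mathbb{N}_0+[0,l-1]}$, and encoding the alternation into the F\o lner sets themselves rather than into the function: each $A_n$ is a long initial interval (which averages $\phi_l$ to $\approx\tfrac12$) together with a periodic tail occupying the residues $[0,l-1]$ mod $2l$ for odd $n$ and $[l,2l-1]$ mod $2l$ for even $n$, which tips the average above or below $\tfrac12$ by a fixed amount $\asymp\lambda$. Since $\phi_l$ is $2l$-periodic, this alternation persists for every starting level of the form $2lk+i$ with $0\le i\le l/4$, i.e.\ for a fraction $\ge 1/9$ of all levels of the tower; this is what lets a tower of total measure $\delta$ contribute measure $\ge\delta/10$ to $D_N$, and the theorem then follows from an iterative Rokhlin construction (the paper perturbs one function repeatedly and passes to an a.e.\ limit, rather than using disjoint towers, but that difference is cosmetic). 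Note also that this explains why the sets $F_n$ cannot be taken to be intervals $[0,M^n)$ as you propose: the whole point of the counterexample is that the admissible F\o lner sets must themselves carry the oscillation.
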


Although this shows that the requirement for $\left(F_{n}\right)$
to have a left $(1+\lambda)$-tempered tail for any $\lambda$ is
essential for Corollary \ref{cor:Let--be} to take place, it is not
clear whether the other requirements are. More generally, the following
question remains open:
\begin{question}
Does every left Følner sequence in a countable amenable group $G$
have a subsequence which satisfies the conclusion of Corollary \ref{cor:Let--be}?
\end{question}

\begin{acknowledgement*}
I would like to thank my advisor Michael Hochman, for suggesting me
the problem studied in this paper, and for many helpful discussions.
\end{acknowledgement*}

\section{Proof of Theorem \ref{prop:counterexample}}

\textbf{Proof of Theorem \ref{prop:counterexample}:} Let $\lambda>0$.
We first construct finite sequences of subsets of $\mathbb{Z}$, which
have good fluctuation and invariance properties, and then concatenate
such sequences to get the whole sequence $\left(F_{n}\right)$ in
question. Fix $l,N\in\mathbb{N}$, and let $\phi_{l}:\mathbb{N}_{0}\rightarrow\{0,1\}$
be the indicator function 
\[
\phi_{l}=\mathbf{1}_{(2l\mathbb{N}_{0}+[0,l-1])}
\]
(here $\mathbb{N}_{0}=\mathbb{N}\cup\{0\}$.) We define a sequence
of subsets $\left(A_{n}\right)_{n=1}^{2N}=\left(A_{n}^{l,N}\right)_{n=1}^{2N}$
recursively:
\[
A_{n+1}=\begin{cases}
[0,\frac{2}{\lambda}M_{n}]\cup\left((2l\mathbb{N}_{0}+[0,l-1])\cap[0,\frac{2+\lambda}{\lambda}M_{n}]\right) & \text{\ensuremath{n+1} is odd}\\{}
[0,\frac{2}{\lambda}M_{n}]\cup\left((2l\mathbb{N}_{0}+[l,2l-1])\cap[0,\frac{2+\lambda}{\lambda}M_{n}-2l+1]\right) & \text{\ensuremath{n+1} is even}
\end{cases}
\]
where $M_{0}=l^{2}$, and $M_{n}=\max(A_{n})$ for $n>0$. This sequence
has the following properties: 

\textbf{(a)} $\left(A_{n}\right)_{n=1}^{2N}$ is $(1+\lambda)$-tempered:
For any $n$, 
\[
A_{n}-\bigcup_{i=0}^{n-1}A_{i}\subset A_{n}-[0,M_{n-1}]\subset[-M_{n-1},M_{n}]
\]
thus 
\[
|A_{n}-\bigcup_{i=0}^{n-1}A_{i}|\leq M_{n}+M_{n-1}\leq\frac{2+\lambda}{\lambda}M_{n-1}+M_{n-1}\leq(1+\lambda)|A_{n}|.
\]

\textbf{(b)} $A_{n}$ is $([-\sqrt{l},\sqrt{l}],2/\sqrt{l})$-invariant
for all $n$; that is, for any $b\in[-\sqrt{l},\sqrt{l}]$, one have
$\frac{|(b+A_{n})\triangle A_{n}|}{|A_{n}|}\leq2/\sqrt{l}$: This
follows immidiately from the fact that $A_{n}$ is a union of segments,
the first one of size at list $l^{2}$, and all but the last one of
size at least $l$.

\textbf{(c)} Assuming $l$ large enough, there are some $0<\alpha<\beta$
s.t. for any $0\leq i\leq l/4$, and any $k$, averaging $\phi_{l}(z+2lk+i)$
as a function of $z$ along $A_{n}$, the sequence of averages fluctuates
across the gap $(\alpha,\beta)$ $N$ times: Averaging along $A_{n}$
of odd $n$ gives
\begin{align*}
\frac{1}{|A_{n}|}\sum_{z\in A_{n}}\phi_{l}(z+2lk+i) & =\frac{\left|(2l\mathbb{N}_{0}+[0,l-1]-i)\cap A_{n}\right|}{|A_{n}|}\\
 & \geq\frac{1}{\left|A_{n}\right|}\cdot\left[l\cdot\left(\frac{\frac{2}{\lambda}M_{n-1}}{2l}-1\right)+\frac{3}{4}l\cdot\left(\frac{M_{n-1}}{2l}-1\right)\right]\\
 & \geq\frac{\left(\frac{1}{\lambda}+\frac{3}{8}\right)M_{n-1}-2l}{\left(\frac{2}{\lambda}+\frac{1}{2}\right)M_{n-1}+l}\\
 & \geq\frac{1}{2}+\frac{\lambda}{4(4+\lambda)}-\frac{4}{l}
\end{align*}
while for even $n$,
\begin{align*}
\frac{1}{|A_{n}|}\sum_{z\in A_{n}}\phi_{l}(z+2lk+i) & \leq\frac{1}{\left|A_{n}\right|}\cdot\left[l\cdot\left(\frac{\frac{2}{\lambda}M_{n-1}}{2l}+1\right)+\frac{1}{4}l\cdot\left(\frac{M_{n-1}}{2l}+1\right)\right]\\
 & \leq\frac{\left(\frac{1}{\lambda}+\frac{1}{8}\right)M_{n-1}+2l}{\left(\frac{2}{\lambda}+\frac{1}{2}\right)M_{n-1}-l}\\
 & \leq\frac{1}{2}-\frac{\lambda}{4(4+\lambda)}+\frac{4}{l}
\end{align*}
(for the error summands $\frac{4}{l}$, we used $M_{n-1}\geq M_{0}=l^{2}$
and assumed $l\geq4$.) Taking $l$ large enough, one get that the
claim above takes place with $\alpha=\frac{1}{2}-\frac{\lambda}{5(4+\lambda)}$
and $\beta=\frac{1}{2}+\frac{\lambda}{5(4+\lambda)}$.

Now construct the whole sequence as follows: Take $l_{0}>100$ and
also large enough so that property (c) takes place, and then define
$(l_{m})_{m=1}^{\infty}$ recursively by the rule
\[
l_{k+1}=\max(A_{2l_{m}}^{l_{m},2l_{m}}).
\]

We define $(F_{n})$ to be the concatenation of the sequences $\{(A_{n}^{l_{m},2l_{m}})_{n=1}^{2l_{m}}\}_{m=0}^{\infty}$.
Using properties (a) and (b) above together with the definition of
$(l_{m})_{m=1}^{\infty}$, one can observe that this sequence is a
$(1+\lambda)$-tempered Følner sequence.

To construct the function which satisfies the conclusion of the theorem,
we need some notation which will be of use here and in the rest of
the paper: For a given function $f$ on a m.p.s. $(X,\mathscr{B},\mu,\{T_{g}\}_{g\in G})$,
a gap $(\alpha,\beta)\subset\mathbb{R}$, a sequence $\left(F_{n}\right)$
of subsets of $G$, and $N,M\in\mathbb{N}$, we shall write 
\begin{align*}
D_{N} & =\{x\,:\,\text{\ensuremath{\left\{ \mathsf{A}_{n}f(x)\right\} _{n=1}^{\infty}} fluctuates across \ensuremath{(\alpha,\beta)} at least \ensuremath{N} times}\}\\
D_{N,M} & =\{x\,:\,\text{\ensuremath{\left\{ \mathsf{A}_{n}f(x)\right\} _{n=1}^{M}} fluctuates across \ensuremath{(\alpha,\beta)} at least \ensuremath{N} times}\}
\end{align*}
where the sequence $\left(F_{n}\right)$, the function $f$ and the
gap $(\alpha,\beta)$ are understood from the context. At some places
we shall write $D_{N}^{f}$ and $D_{N,M}^{f}$ to specify the function
$f$ for which the sets refer to.

We will construct the function in question by applying iteratively
infinitely many times the following lemma: 
\begin{lem}
\label{lem:Let-.-For}Let $f:X\rightarrow[0,1]$. For any $\epsilon>0$,
$1>\delta>0$, and $n',N',N''\in\mathbb{N}$, there exists a measurable
function $\hat{f}:X\rightarrow[0,1]$ s.t. the following holds:
\end{lem}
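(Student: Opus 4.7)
The plan is to prove this lemma as the inductive step in the construction of the counterexample function: iterating it with summable choices of $\epsilon$ and $\delta$ and with suitably prescribed $N',N''$ should yield an $L^{1}$-limit whose fluctuation sets satisfy the required $\omega(N)$-bound for all large $N$. The construction of $\hat{f}$ itself I would carry out by a tower-based perturbation, following the template of the finite sequences $(A_{n}^{l,N})$ with their properties (a)--(c) established above.

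First I would apply an Ornstein--Weiss/Rokhlin tower decomposition to find a base $B\subseteq X$ such that $\{T^{i}B\}_{i=0}^{L-1}$ are pairwise almost disjoint and cover all but a $\delta/2$-fraction of $X$, with height $L$ chosen vastly larger than $\max(A_{2N''}^{l,2N''})$ for a suitable large integer $l$ to be pinned down in the last step. I would then define $\hat{f}=f$ outside a thin horizontal slab of relative density at most $\epsilon$ inside each tower, and on the slab paste the pattern $\phi_{l}$ shifted by a random offset drawn symmetrically so that its expected value is $\tfrac{1}{2}$. This makes $\|\hat{f}-f\|_{1}<\epsilon$ automatic and preserves $\hat{f}\in[0,1]$.

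Preservation of the $N'$ already-existing fluctuations at scales $n\le n'$ would follow from property (b): since $l$ is chosen $\gg\max F_{n'}$, the contribution of the pasted $\phi_{l}$ to any Følner average $\mathsf{A}_{n}$ with $n\le n'$ equals $\tfrac{1}{2}$ up to an error of order $1/\sqrt{l}$ plus boundary terms from the slab, so $\mathsf{A}_{n}\hat{f}-\mathsf{A}_{n}f$ is essentially a small deterministic constant that can be absorbed by further shrinking the slab density. Production of the $N''$ new fluctuations on $D_{N',n'}^{f}$ would come from property (c) applied to the block $(A_{n}^{l,2N''})_{n=1}^{2N''}$ realised inside each tower: by Fubini in the random offset, for a $(1-\delta/2)$-fraction of $x\in D_{N',n'}^{f}$ the tower coordinate places $x$ in a position where the averages of $\phi_{l}$ along these blocks oscillate across a fixed subgap of $(\alpha,\beta)$, delivering the additional $N''$ crossings of $\mathsf{A}_{n}\hat{f}$ without contaminating the $N'$ older ones.

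The main obstacle is the multi-scale balancing. The parameter $l$ must be large enough so that property (b) neutralises the perturbation at the old scales $n\le n'$, yet small enough compared to $L$ so that many fluctuation-producing $A_{n}^{l,2N''}$-blocks fit inside a single tower and property (c) can be invoked. One must also rule out spurious crossings of $(\alpha,\beta)$ at the intermediate scales lying between the $F_{n'}$-regime and the $A_{n}^{l,2N''}$-regime, which is arranged by the fact that $\phi_{l}$ averages to $\tfrac{1}{2}$ on any scale that is large compared to $l$ but still well below the tower height $L$, so the ergodic averages of $\hat{f}$ track those of $f$ plus a nearly constant shift on that intermediate range.
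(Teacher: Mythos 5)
Your overall skeleton (Rokhlin tower, paste $\phi_{l}$ on a small part of it, invoke property (c) for the new fluctuations) matches the paper's, but two of your mechanisms are wrong in ways that break the proof. First, the preservation of the old fluctuations. You argue that because $l\gg\max F_{n'}$, the pasted $\phi_{l}$ contributes $\tfrac12+O(1/\sqrt{l})$ to every average $\mathsf{A}_{n}$ with $n\le n'$, so the perturbation is ``essentially a small deterministic constant.'' This is backwards: the average of $\phi_{l}$ over a window much \emph{shorter} than $l$ is close to $0$ or $1$ depending on position, not to $\tfrac12$ (it is the averages over windows much \emph{longer} than $l$ that are near $\tfrac12$). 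Worse, even if the shift were a genuine constant $c>0$, it would move $\mathsf{A}_{n}\hat f$ off $\mathsf{A}_{n}f$ and the fluctuations across the \emph{same} gap $(\alpha,\beta)$ required by $D_{N}^{\hat f}$ would not survive; and shrinking the slab density does not make the per-point perturbation small, since a point sitting inside the slab has its entire small-scale window perturbed. The correct mechanism, which the paper uses, is purely measure-theoretic: modify $f$ only on the full-height subtower over a subset $B'\subset B$ of measure $0.99\delta/h$, so that every point of $\bigcup_{n=0}^{h-L}T^{n}(B\setminus B')$ has its whole length-$L$ orbit window untouched and $\mathsf{A}_{n}\hat f=\mathsf{A}_{n}f$ there \emph{exactly} for all $n\le n'$; the exceptional set has measure $<\delta$.

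Second, your bookkeeping of $\epsilon$ versus $\delta$ is off. You perturb on a set of density at most $\epsilon$, but property (i) of the lemma demands $\mu(D_{N''}^{\hat f})>\delta/10$, so the perturbed set must have measure comparable to $\delta$ (and only a fixed fraction, about $1/8$ by property (c), of the positions along the subtower actually fluctuate --- your claimed $(1-\delta/2)$-fraction is unobtainable and unnecessary). The parameter $\epsilon$ plays a different role: it is the allowed loss in $\mu(D_{N,n'}^{f})$ in property (iii), and to control it you need an ingredient entirely absent from your sketch, namely choosing the tower height $h$ so large that for most base points the orbit along the tower spends at least $\mu(D_{N,n'}^{f})-\epsilon/4$ of its time in $D_{N,n'}^{f}$; without this equidistribution input you cannot conclude that the untouched part of the tower still carries measure $\ge\mu(D_{N,n'}^{f})-\epsilon$ of old fluctuations. (Your worry about spurious intermediate-scale crossings is moot: all three conclusions are lower bounds, so extra fluctuations never hurt.)
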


\begin{enumerate}
\item \label{enu:,}$\mu\left(D_{N''}^{\hat{f}}\right)>\frac{1}{10}\delta$.
\item \label{enu:,-where-,}$\mu\left(\left(f(T^{i}x)\right)_{i=0}^{L-1}\neq\left(\hat{f}(T^{i}x)\right)_{i=0}^{L-1}\right)\leq\delta$,
where $L:=\max\left(\bigcup_{n=1}^{n'}F_{n}\right)$.
\item \label{enu:For-all-,}For all $N\leq N'$, $\mu\left(D_{N}^{\hat{f}}\right)\geq\min\left\{ \mu(D_{N,n'}^{f})-\epsilon,\frac{1}{10}\right\} $.
\end{enumerate}
\begin{proof}
We will assume w.l.o.g. that $\epsilon$ is small enough so that $\epsilon<\min\left\{ \frac{\delta}{100},1-\delta\right\} $.
Take an $m\in\mathbb{N}$ that satisfies $l_{m}\geq N''$. Let $B\subset X$
be a base for a Rokhlin tower of height $h$ and total measure $>1-\epsilon/4$,
where $h$ is large enough to satisfy
\begin{equation}
h>\frac{(L+\max A_{2l_{m}}^{l_{m},2l_{m}})}{\epsilon/4}.\label{eq:-1-1}
\end{equation}
and also large enough to guarantee that
\begin{equation}
\mu\left(x\in B:\,\forall N\leq N',\sum_{i=0}^{h-1}\mathbf{1}_{D_{N,n'}^{f}}(T^{i}x)>\mu(D_{N,n'}^{f})-\epsilon/4\right)>(1-\epsilon/4)\mu(B)\label{eq:-7}
\end{equation}
in words, for all $N\leq N'$, for at least $1-\epsilon/4$ of the
$x$'s in $B$, their orbit along the tower spends more than $\mu(D_{N,n'}^{f})-\epsilon/4$
of the time in the set $D_{N,n'}^{f}$ (For the validity of such a
requirement, see for example \cite[Theorem 7.13]{key-17}).

Take $B'\subset B$ of measure $\mu(B')=0.99\delta/h$ (this can be
achieved because $1-\epsilon>\delta$), and define $\hat{f}$ to be:
\[
\hat{f}(x)=\begin{cases}
\phi_{l_{m}}(i) & x\in T^{i}B',\,0\leq i\leq h-1\\
f(x) & x\in X\backslash\bigcup_{i=0}^{h-1}T^{i}B'
\end{cases}
\]
The validity of property (c) above for $\left(A_{n}^{l_{m},2l_{m}}\right)_{n=1}^{2l_{m}}$
and thus for $\left(F_{n}\right)$, together with the definition of
$\hat{f}$ as $\phi_{l_{m}}$ on the tower above $B'$, implies that
for any $0\leq i\leq l_{m}/4$ and any $k\geq0$ s.t. $2kl_{m}+i<h-\max A_{l_{m}}^{l_{m},2l_{m}}$,
one has:
\[
T^{2kl_{m}+i}B'\subset D_{l_{m}}^{\hat{f}}\subset D_{N''}^{\hat{f}}
\]

The density of these levels in the tower is at least 
\[
\left(\frac{l_{m}}{4}-1\right)\cdot\left(\frac{h}{2l_{m}}-1\right)/h>\frac{1}{8}-\frac{l_{m}}{h}-\frac{1}{l_{m}}
\]
and since $l_{m}\geq l_{0}>100$ and $\frac{l_{m}}{h}\leq\frac{\max A_{l_{m}}^{l_{m},2l_{m}}}{h}\leq\epsilon/4<\frac{1}{100}$,
the last expression is at least $\frac{1}{9}$. Thus
\begin{equation}
\mu\left(D_{N''}^{\hat{f}}\right)\geq\frac{1}{9}\mu\left(\bigcup_{n=0}^{h-L}T^{n}B'\right)=\frac{1}{9}(0.99\delta)>\frac{1}{10}\delta\label{eq:-3-1}
\end{equation}
 which gives property \ref{enu:,} of the conclusion.

To see why property \ref{enu:,-where-,} of the conclusion holds,
notice that
\begin{equation}
\bigcup_{n=0}^{h-L}T^{n}\left(B\backslash B'\right)\subset\left\{ x:\,\left(f(T^{i}x)\right)_{i=0}^{L-1}=\left(\hat{f}(T^{i}x)\right)_{i=0}^{L-1}\right\} \label{eq:-2-1}
\end{equation}
thus
\begin{align*}
\mu\left(\left(f(T^{i}x)\right)_{i=0}^{L-1}=\left(\hat{f}(T^{i}x)\right)_{i=0}^{L-1}\right) & \geq1-\epsilon/4-L\mu(B)-h\mu(B')\\
 & >1-\epsilon/4-\epsilon/4-0.99\delta\\
 & >1-\delta.
\end{align*}
Finally, by (\ref{eq:-2-1}) we have for all $N$
\[
\bigcup_{n=0}^{h-L}T^{n}\left(B\backslash B'\right)\cap D_{N,n'}^{f}\subset D_{N,n'}^{\hat{f}}
\]
and by (\ref{eq:-7}) and (\ref{eq:-1-1}), we have for all $N\leq N'$,
\[
\mu\left(\bigcup_{n=0}^{h-L}T^{n}\left(B\backslash B'\right)\cap D_{N,n'}^{f}\right)\geq\mu\left(D_{N,n'}^{f}\right)\mu\left(\bigcup_{n=0}^{h-L}T^{n}\left(B\backslash B'\right)\right)-\frac{3}{4}\epsilon
\]
That, together with the first inequality in (\ref{eq:-3-1}) gives
for all $N\leq N'$
\begin{align*}
\mu\left(D_{N}^{\hat{f}}\right) & \geq\mu\left(\bigcup_{n=0}^{h-L}T^{n}\left(B\backslash B'\right)\cap D_{N}^{\hat{f}}\right)+\mu\left(\bigcup_{n=0}^{h-L}T^{n}B'\cap D_{N}^{\hat{f}}\right)\\
 & \geq\mu\left(D_{N,n'}^{f}\right)\mu\left(\bigcup_{n=0}^{h-L}T^{n}\left(B\backslash B'\right)\right)-\frac{3}{4}\epsilon+\frac{1}{9}\mu\left(\bigcup_{n=0}^{h-L}T^{n}B'\right)\\
 & \geq\min\left\{ \mu\left(D_{N,n'}^{f}\right),\frac{1}{9}\right\} \mu\left(\bigcup_{n=0}^{h-L}T^{n}B\right)-\frac{3}{4}\epsilon\\
 & \geq\min\left\{ \mu\left(D_{N,n'}^{f}\right),\frac{1}{9}\right\} -\epsilon\\
 & \geq\min\left\{ \mu(D_{N,n'}^{f})-\epsilon,\frac{1}{10}\right\} 
\end{align*}
which gives property \ref{enu:For-all-,} of the conclusion.
\end{proof}
Let $\omega(n)\searrow0$ be any sequence which decreases to 0. Define
$\left(N_{k}\right)_{k=1}^{\infty}$ by 
\[
N_{k}=\min\{N:\:\omega(N)<\frac{1}{10}2^{-k-1}\}
\]
 We will construct a function $f$ which satisfies for all $k$
\[
\mu(D_{N_{k}}^{f})\geq\frac{1}{10}2^{-k}
\]
and by monotonicity of $\mu\left(D_{N}^{f}\right)$ and $\omega(N)$,
for any $N_{k}\leq N<N_{k+1}$, $k\geq1$,
\[
\mu\left(D_{N}^{f}\right)\geq\mu\left(D_{N_{k+1}}^{f}\right)\geq\frac{1}{10}2^{-k-1}>\omega(N_{k})\geq\omega(N)
\]
and the conclusion of Theorem \ref{prop:counterexample} follows.

Take $f_{0}\equiv0$, and define inductively $\left(f_{k}\right)_{k=0}^{\infty}$
: Given $f_{k-1}$, assume that
\begin{equation}
\exists n_{k-1}\,\forall1\leq i\leq k-1,\qquad\mu\left(D_{N_{i},n_{k-1}}^{f_{k-1}}\right)>\frac{1}{10}2^{-i}\label{eq:-10}
\end{equation}

Take $\epsilon>0$ small enough so that for all $i\leq k-1$,
\[
\mu\left(D_{N_{i},n_{k-1}}^{f_{k-1}}\right)-\epsilon>\frac{1}{10}2^{-i}
\]
 and apply Lemma \ref{lem:Let-.-For} with $f:=f_{k-1}$,$N'=N_{k-1}$,
$N''=N_{k}$, $n'=n_{k-1}$, $\delta=2^{-k}$ while letting $f_{k}$
be the resulting finction $\hat{f}$. This $f_{k}$ satisfies the
hypothesis (\ref{eq:-10}) in the inductive step: By property \ref{enu:For-all-,}
of the lemma, for all $i\leq k-1$, 
\begin{equation}
\mu\left(D_{N_{i}}^{f_{k}}\right)\geq\min\left\{ \mu(D_{N_{i},n_{k-1}}^{f_{k-1}})-\epsilon,\frac{1}{10}\right\} >\frac{1}{10}2^{-i}\label{eq:-4-1}
\end{equation}
 and by property \ref{enu:,} of the lemma,
\begin{equation}
\mu\left(D_{N_{k}}^{f_{k}}\right)>\frac{1}{10}2^{-k}.\label{eq:-5-1}
\end{equation}
 Since $\mu(D_{N,n}^{f_{k}})\overset{{\scriptscriptstyle n\rightarrow\infty}}{\longrightarrow}\mu(D_{N}^{f_{k}})$,
there exists large enough $n_{k}$ s.t. (\ref{eq:-4-1}) and (\ref{eq:-5-1})
will be satisfied with $D_{N,n_{k}}^{f_{k}}$ in place of $D_{N}^{f_{k}}$.
Thus the hypothesis (\ref{eq:-10}) of the induction step is indeed
satisfied with $k$ in place of $k-1$. 

We end up with a sequence $\left(f_{k}\right)_{k=0}^{\infty}$ together
with a sequence $\left(n_{k}\right)$ which we can assume to be increasing.
By property (\ref{enu:,-where-,}) of the lemma, $\left(f_{k}\right)$
converges a.e. to some limit, call it $f$. For each $k$, let 
\[
L_{k}:=\max\left(\bigcup_{n=1}^{n_{k}}F_{n}\right)
\]
then again by property (\ref{enu:,-where-,}) of the lemma, $f$ satisfies
\begin{align*}
\mu\left(\left(f_{k}(T^{n}x)\right)_{n=0}^{L_{k}-1}\neq\left(f(T^{n}x)\right)_{n=0}^{L_{k}-1}\right) & \leq\sum_{i\geq k}\mu\left(\left(f_{i}(T^{n}x)\right)_{n=0}^{L_{k}-1}\neq\left(f_{i+1}(T^{n}x)\right)_{n=0}^{L_{k}-1}\right)\\
 & \leq\sum_{i\geq k}\mu\left(\left(f_{i}(T^{n}x)\right)_{n=0}^{L_{i}-1}\neq\left(f_{i+1}(T^{n}x)\right)_{n=0}^{L_{i}-1}\right)\\
 & \leq\sum_{i\geq k}2^{-i}\\
 & =2^{-k+1}
\end{align*}
(in the second inequality we used the assumption that $n_{i}\geq n_{i-1}$
for all $i$). Thus for any $i$,
\begin{align*}
\mu\left(D_{N_{i}}^{f}\right) & \geq\mu\left(D_{N_{i},n_{k}}^{f}\right)\\
 & \geq\mu\left(D_{N_{i},n_{k}}^{f_{k}}\right)-2^{-k+1}\\
 & >\frac{1}{10}2^{-i}-2^{-k+1}
\end{align*}
taking $k\rightarrow\infty$ gives 
\[
\mu\left(D_{N_{i}}^{f}\right)\geq\frac{1}{10}2^{-i}
\]
and the proof of Theorem \ref{prop:counterexample} is complete.$\hfill\square$

\section{Proof of Theorem \ref{thm:Amn Flucts}}

In this section, we will prove Theorem \ref{thm:Amn Flucts}. Towards
this end, we need few definitions and lemmas.
\begin{defn}
\label{def:inv. cond.}Given $0<\lambda<1$, we say that a sequence
$\left(F_{n}\right)$\textit{ }is\textit{ $\lambda$-good}, if the
following two conditions hold:
\end{defn}

\begin{enumerate}
\item \label{enu:For-any-,-1}For any $n$, $\left|\bigcup_{i<n}F_{i}^{-1}F_{n}\,\backslash\,F_{n}\right|\leq\lambda\left|F_{n}\right|$.
\item \label{enu:For-any-}For any $i<n$ and $f\in F_{i}$, $\left|F_{n}\backslash F_{n}f\right|<\lambda\left|F_{n}\right|$.
\end{enumerate}
\begin{prop}
\label{prop:Let-.-For}Let $0<\lambda<\lambda'<1$. For any $(1+\lambda)$-bi-tempered
two-sided Følner sequence $\left(F_{n}\right)$, there is some $n_{0}$
s.t. $\left(F_{n}\right)_{n\geq n_{0}}$ is $\lambda'$-good.
\end{prop}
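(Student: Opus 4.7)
Set $\epsilon = (\lambda'-\lambda)/2 > 0$. The idea is that a single fixed group element $g_{0}$ already witnesses enough left and right invariance of $F_{n}$ to bridge the gap between $(1+\lambda)$-bi-temperedness and $\lambda'$-goodness, once $n$ is large enough.

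Fix any $g_{0}\in F_{1}$, which is possible since $F_{1}$ is a nonempty finite set. Because $(F_{n})$ is two-sided F\o lner, there exists $n_{0}>1$ such that for every $n\geq n_{0}$,
\[
|F_{n}\cap g_{0}^{-1}F_{n}|\geq(1-\epsilon)|F_{n}|\quad\text{and}\quad|F_{n}\cap F_{n}g_{0}^{-1}|\geq(1-\epsilon)|F_{n}|.
\]
I claim $(F_{n})_{n\geq n_{0}}$ is then $\lambda'$-good. For condition (i), set $U_{n}=\bigcup_{i<n}F_{i}^{-1}F_{n}$; left $(1+\lambda)$-temperedness gives $|U_{n}|\leq(1+\lambda)|F_{n}|$, and since $g_{0}\in F_{1}$ and $n>1$ we have $g_{0}^{-1}F_{n}\subseteq F_{1}^{-1}F_{n}\subseteq U_{n}$, whence $|U_{n}\cap F_{n}|\geq(1-\epsilon)|F_{n}|$, so
\[
|U_{n}\setminus F_{n}|=|U_{n}|-|U_{n}\cap F_{n}|\leq(\lambda+\epsilon)|F_{n}|<\lambda'|F_{n}|.
\]

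For condition (ii), put $V_{n}=\bigcup_{j<n}F_{n}F_{j}^{-1}$. Right $(1+\lambda)$-temperedness gives $|V_{n}|\leq(1+\lambda)|F_{n}|$, and $F_{n}g_{0}^{-1}\subseteq V_{n}$ yields $|V_{n}\cap F_{n}|\geq(1-\epsilon)|F_{n}|$, so $|V_{n}\setminus F_{n}|<\lambda'|F_{n}|$. For any $i<n$ and $f\in F_{i}$, the identity $|F_{n}\setminus F_{n}f|=|F_{n}f^{-1}\setminus F_{n}|$ (from $|F_{n}f|=|F_{n}|$), together with $F_{n}f^{-1}\subseteq F_{n}F_{i}^{-1}\subseteq V_{n}$, gives
\[
|F_{n}\setminus F_{n}f|\leq|V_{n}\setminus F_{n}|<\lambda'|F_{n}|,
\]
which is condition (ii). The argument is essentially a routine calculation; the only substantive point is that the excess $\lambda$ in the temperedness bounds is absorbed by the F\o lner invariance of $F_{n}$ under the single fixed element $g_{0}$, and that the two-sided F\o lner hypothesis is used precisely to handle conditions (i) (left invariance) and (ii) (right invariance) simultaneously.
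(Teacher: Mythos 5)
Your proof is correct and follows essentially the same route as the paper's: fix $g_{0}\in F_{1}$, use the two-sided F\o lner property to lower-bound $|U_{n}\cap F_{n}|$ and $|V_{n}\cap F_{n}|$ via $g_{0}^{-1}F_{n}$ and $F_{n}g_{0}^{-1}$, subtract from the temperedness upper bounds, and handle condition (ii) through the identity $|F_{n}\setminus F_{n}f|=|F_{n}f^{-1}\setminus F_{n}|$. The only (immaterial) difference is that you bound the full unions over all $i<n$ rather than the tail unions, which is slightly stronger than what is needed.
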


\begin{proof}
Pick some $g_{0}\in F_{1}$. Since the sequence is (left) Følner,
there is some $n_{1}$ s.t. for all $n\geq n_{1}$,
\[
|g_{0}^{-1}F_{n}\cap F_{n}|>(1-\lambda'+\lambda)|F_{n}|
\]
By the (left) temperedness property of $\left(F_{n}\right)$, we have
\begin{align*}
\left|\bigcup_{n_{1}\leq i<n}F_{i}^{-1}F_{n}\,\backslash\,F_{n}\right| & \leq\left|\bigcup_{i<n}F_{i}^{-1}F_{n}\right|-\left|\bigcup_{i<n}F_{i}^{-1}F_{n}\,\cap\,F_{n}\right|\\
 & <(1+\lambda)|F_{n}|-(1-\lambda'+\lambda)|F_{n}|\\
 & =\lambda'|F_{n}|
\end{align*}
and \ref{enu:For-any-,-1} of Definition \ref{def:inv. cond.} takes
place. The same proof applies from the right, thus we get some $n_{2}$
s.t. for any $n\geq n_{2}$
\[
\left|\bigcup_{n_{2}\leq i<n}F_{n}F_{i}^{-1}\,\backslash\,F_{n}\right|\leq\lambda'\left|F_{n}\right|
\]
but now, for any $i<n$ and $f\in F_{i}$,
\[
\left|F_{n}\backslash F_{n}f\right|=\left|F_{n}f^{-1}\backslash F_{n}\right|\leq\left|\bigcup_{n_{2}\leq i<n}F_{n}F_{i}^{-1}\,\backslash\,F_{n}\right|\leq\lambda'\left|F_{n}\right|
\]
which is \ref{enu:For-any-} of Definition \ref{def:inv. cond.}.
Now take $n_{0}=max\left\{ n_{1},n_{2}\right\} $.
\end{proof}
The following theorem is a version of Theorem \ref{thm:Amn Flucts}
for $\lambda$-good Følner sequences, from which we will deduce Theorem
\ref{thm:Amn Flucts}:
\begin{thm}
\label{thm:For-any-}For any $\alpha<\beta$ and $S>0$, there exist
$\lambda>0$, $0<c_{0}<1$ and $c_{1}>0$, s.t. for any $\lambda$-good
(left) Følner sequence $\left(F_{n}\right)$, any m.p.s. $(X,\mu,\mathscr{B},(T_{g})_{g\in G})$
and any $f\in L_{\mu}^{\infty}(X)$ with $||f||_{\infty}\leq S$,
one has 
\[
\mu(D_{N})\leq c_{1}c_{0}^{N}\quad(\forall N)
\]
\end{thm}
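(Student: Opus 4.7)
The plan is to prove Theorem~3.3 by an inductive-type argument that extracts the exponential bound $\mu(D_N)\le c_1 c_0^N$ from a recursive inequality of the form $\mu(D_N)\le c_0\,\mu(D_{N-1})$, with a constant $c_0\in(0,1)$ depending on $\alpha,\beta,S$ and $\lambda$. Choosing $\lambda$ sufficiently small in terms of $(\beta-\alpha)/S$ will force $c_0$ bounded away from $1$, while $c_1$ absorbs the first few levels of $(F_n)$, where Definition~3.1 gives no useful information and the bound instead has to be obtained from the weak-type maximal inequality for tempered Følner sequences.

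The central analytic ingredient is an approximate commutation of the averaging operators afforded by the $\lambda$-good conditions. Expanding
\[
\mathsf{A}_n(\mathsf{A}_i f)(x)=\frac{1}{|F_n||F_i|}\sum_{g\in F_n}\sum_{h\in F_i} f(T_{hg}x)=\frac{1}{|F_i|}\sum_{h\in F_i}\frac{1}{|F_n|}\sum_{g'\in hF_n}f(T_{g'}x),
\]
condition~(i) of Definition~3.1 gives $|hF_n\,\triangle\, F_n|\le 2\lambda|F_n|$ for $h\in F_i$ with $i<n$, so $\|\mathsf{A}_n(\mathsf{A}_i f)-\mathsf{A}_n f\|_\infty\le 2\lambda S$, and a symmetric estimate using condition~(ii) controls $\mathsf{A}_i(\mathsf{A}_n f)$ in comparison with $\mathsf{A}_n f$. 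Informally, stacking a small-scale average on top of a large-scale average changes the value by only $O(\lambda S)$ pointwise, so the entire fluctuation pattern $\{\mathsf{A}_n f(x)\}_n$ is robust to further averaging.

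The recursive step then isolates the final up-crossing: for $x\in D_N$, decompose according to the minimal index $n=n(x)$ at which the $N$th fluctuation is completed, so that $\mathsf{A}_n f(x)\ge\beta$ and $\mathsf{A}_m f(x)\le\alpha$ for some $m<n$, with $x$ already witnessing $N-1$ crossings up to time $m$. Writing $\mathbf{1}_{D_N}\le \sum_{m<n}\mathbf{1}_{E_{m,n}}$ with $E_{m,n}\subset D_{N-1}\cap\{\mathsf{A}_m f\le\alpha\}\cap\{\mathsf{A}_n f\ge\beta\}$, the approximate commutation combined with $\mu$-invariance and a Vitali-type disjointification (afforded by the temperedness encoded in condition~(i)) should yield $\mu(D_N)\le c_0\,\mu(D_{N-1}^{(\alpha',\beta')})$ for a slightly shrunken gap $(\alpha',\beta')\Subset(\alpha,\beta)$ that absorbs the $O(\lambda S)$ slack.

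The main obstacle is precisely this transfer of a pointwise fluctuation pattern at $x$ into a measure-theoretic lower bound on the set of translates $g$ for which $T_g x$ still exhibits $N-1$ fluctuations: the approximate commutation only says that averages commute, not that individual values line up. The resolution I expect is to couple the approximate commutation with a maximal/covering selection of disjoint witnessing translates, so that the measure bookkeeping forces a genuine $c_0<1$. A secondary technical point is to fix the relaxed gap $(\alpha',\beta')$ once and for all at the outset and prove the recursion with the same gap on both sides, preventing the gap from shrinking at each step and thereby degenerating after $O(1/\lambda)$ iterations.
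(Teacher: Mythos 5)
Your outline does not contain the mechanism that actually produces the exponential decay, and the recursion you propose in its place would not close. The proof in the paper (following Kalikow--Weiss) transfers the problem to the group: for a fixed $x$ and a very invariant window $\Omega$ it considers $C=\{c:\ cx\in D_{N,M}\}$ and uses the fluctuations of the points $cx$ to build, via an effective Vitali covering lemma (Proposition \ref{lem:Key1 Lemma}, which requires $q$ candidate scales per point, not one), an alternating chain of $\epsilon$-disjoint covers by translates $F_nc$ on which the averages $\mathsf{A}_nf(cx)$ are alternately $\geq\beta$ and $\leq\alpha$. The quantitative engine is a mass comparison: summing $f(gx)$ over an $\epsilon$-disjoint union $C_k$ of ``upcrossing'' sets gives at least $\left(\beta-\tfrac{\epsilon}{2}S\right)|C_k|$, while summing over the covering union $C_k''$ of ``downcrossing'' sets gives at most $\tfrac{\alpha}{1-\epsilon}|C_k''|$, forcing $|C_k''|\geq(1+\delta)|C_k|$ with $1+\delta$ comparable to $\beta/\alpha$. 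Since all these unions live inside the fixed window $\Omega$, only about $\log_{1+\delta/2}(|\Omega|/|C|)$ such expansions are possible, which is exactly the bound $|C|/|\Omega|\leq(1+\delta/2)^{-(N/2q-3)}$. Nothing in your plan plays the role of this $\beta/\alpha$ comparison. Your central ingredient, the approximate commutation $\|\mathsf{A}_n(\mathsf{A}_if)-\mathsf{A}_nf\|_\infty\leq 2\lambda S$, is a correct consequence of Definition \ref{def:inv. cond.} but is never needed and produces no growth factor; condition (ii) is used in the paper only to replace $F_ng$ by $F_nc(g)$ up to an $O(\lambda)$ symmetric difference when passing from $\mathscr{B}_k'$ to $\mathscr{B}_k''$.

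Two further concrete problems. First, a single-step recursion $\mu(D_N)\leq c_0\,\mu(D_{N-1})$ with $c_0<1$ independent of everything is not what the covering machinery yields: with only one witnessing scale per point, the $\epsilon$-disjointification lemma extracts a disjoint subfamily whose union has size only $\geq\tfrac{\epsilon}{5}|C|$, far from covering $(1-\epsilon)|C|$; one needs a block of $q=q(\epsilon)$ fluctuations per step, which is why the paper's constant is $c_0=(1+\delta/2)^{-1/2q}$, very close to $1$. Second, your ``main obstacle'' paragraph concedes that the passage from a pointwise fluctuation pattern at $x$ to a lower bound on the set of witnessing translates is unresolved, and your proposed fix still lets the gap $(\alpha',\beta')$ shrink at each step, which would degenerate after $O((\beta-\alpha)/\lambda S)$ iterations. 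In the paper no gap ever shrinks: the loss $O(\epsilon S)$ is paid once, inside the single inequality $(\beta-4\epsilon S)(1-\epsilon)/\alpha\geq 1+\delta$ fixed before the iteration begins. (Also, $c_1$ in Theorem \ref{thm:For-any-} is the absolute constant $(1+\delta/2)^3$; absorbing an initial non-good segment happens only when deducing Theorem \ref{thm:Amn Flucts}, not here.) As it stands the proposal identifies the right general framework (transference plus covering) but omits the step that makes the argument quantitative, so it is a statement of intent rather than a proof.
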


We remark that as opposed to Theorem \ref{thm:Amn Flucts}, here the
constant $c_{0}$ doesn't depend on $\left(F_{n}\right)$.

Once Theorem \ref{thm:For-any-} is valid, the proof of Theorem \ref{thm:Amn Flucts}
is immidiate:

\textbf{Proof of Theorem \ref{thm:Amn Flucts}:} For $[\alpha,\beta]$
and $S>0$, let $\lambda'$ be the value for which any $\lambda'$-good
Følner sequence satisfies the conclusion of Theorem \ref{thm:For-any-}
with $c_{0}'$ and $c_{1}'$. Take $0<\lambda<\lambda'$, then by
Proposition \ref{prop:Let-.-For}, for any $(1+\lambda)$-bi-tempered
two-sided Følner sequence $\left(F_{n}\right)$, there is some $n_{0}$,
s.t. $\left(F_{n}\right)_{n\geq n_{0}}$ is $\lambda'$-good, and
thus for any m.p.s. $(X,\mu,\mathscr{B},(T_{g})_{g\in G})$, any $f\in L_{\mu}^{\infty}(X)$
with $||f||_{\infty}\leq S$ and any $N$,
\[
\mu\left(D_{\left(F_{n}\right)_{n\geq1},N}\right)\leq\mu\left(D_{\left(F_{n}\right)_{n\geq n_{0}},N-n_{0}}\right)\leq c_{1}'c_{0}'^{N-n_{0}}
\]
thus for $c_{0}=c_{0}'$ , $c_{1}=c'c_{0}'^{-n_{0}}$ the conclusion
follows.$\hfill\square$

Thus it remains to prove Theorem \ref{thm:For-any-}, which will be
our task for the rest of the paper.
\begin{defn}
Given $\epsilon>0$, a collection $\left(F_{j}\right)_{j=1}^{L}$
of finite subsets of $G$ is said to be \textit{$\epsilon$-disjoint}
if there are pairwise disjoint sets $E_{j}\subset F_{j}$ s.t. $\left|E_{j}\right|\geq(1-\epsilon)\left|F_{j}\right|$
for all $1\leq j\leq L$.
\end{defn}

We record here a version of the $\epsilon$-disjointification lemma
\cite[Lemma 9.2]{key-15}, which will be uses again and again:
\begin{lem}
($\epsilon$-disjointification lemma) Let $F_{1},...,F_{L}$ be a
sequence of finite subsets of a group $G$ which is $2$-tempered,
let $C\subset G$ be finite, and suppose that $C_{1},...,C_{L}$ are
disjoint subsets of $C$. For any $0<\epsilon\leq\frac{1}{2}$, there
are subsets $D_{j}\subset C_{j}$, s.t. :
\end{lem}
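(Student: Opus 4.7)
The standard route to a disjointification lemma of this shape is a greedy selection processed in reverse order of the index, $j = L, L-1, \ldots, 1$. At each outer step I would maintain the ``already committed'' union of pairwise disjoint pieces $U_{>j} := \bigsqcup_{i>j,\, d \in D_{i}} E_{i,d}$, and then run a secondary greedy loop on $C_{j}$: an element $c \in C_{j}$ is admitted into $D_{j}$, with witness $E_{j,c} \subset F_{j} c \setminus (U_{>j} \cup \bigsqcup_{c' \in D_{j} \text{ chosen earlier}} E_{j,c'})$ of size $\ge (1-\epsilon)|F_{j}|$, exactly when such an $E_{j,c}$ exists; otherwise $c$ is discarded. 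This mechanically enforces the $\epsilon$-disjointness conclusion on the translates $\{F_{j} d : d \in D_{j}\}_{j,d}$ via the witnesses $E_{j,d}$.

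The quantitative part of the conclusion -- presumably a lower bound on $\sum_{j}|D_{j}||F_{j}|$, or on $|\bigcup_{j} F_{j} D_{j}|$, in terms of $|\bigcup_{j} C_{j}|$ or $|\bigcup_{j} F_{j} C_{j}|$ -- is then extracted by a double-counting of incidence pairs $(c, g)$ with $g \in F_{j} c$ and $g$ already committed at the moment $c$ is examined. Each discarded $c \in C_{j}\setminus D_{j}$ contributes at least $\epsilon |F_{j}|$ such incidences, by definition of the greedy criterion. Reversing the order of summation: if we fix $g$, the pairs $(j, c)$ with $g \in F_{j} c$ force $c \in F_{j}^{-1} g$, so summing over $j$ gives at most $|\bigcup_{i} F_{i}^{-1} g|$ pairs per $g$. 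The $2$-temperedness hypothesis $|\bigcup_{i<n} F_{i}^{-1} F_{n}| \le 2 |F_{n}|$ turns this into a uniform bound of the form $(\text{const}) \cdot |U|$ on the total incidence count, where $U$ is the final covered set, and rearranging yields the desired coverage estimate.

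The main technical obstacle is keeping the double-counting constant bounded by an absolute number (the $2$ coming from temperedness) rather than one that grows with $L$ or with the configuration of $(F_{j}, C_{j})$: this is precisely what the tempered hypothesis is built to prevent, since otherwise the left shadows $F_{i}^{-1} F_{j}$ accumulate and the counting step fails. A secondary bookkeeping issue is that the inner loop within a single level $j$ must charge its own discards against the residue already committed at that same level $j$; but since the criterion is only $\epsilon$-disjointness (not strict disjointness) and the inner pieces $E_{j, c'}$ are themselves subsets of $F_{j}$-translates, the same pair-counting argument absorbs the inner level cleanly into the outer estimate.
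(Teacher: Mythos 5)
The paper does not actually prove this lemma: it records it as a version of \cite[Lemma 9.2]{key-15} and uses it as a black box, so there is no internal proof to compare against. Judged on its own, your plan has the right skeleton --- reverse-order greedy selection $j=L,L-1,\dots,1$ with witnesses $E_{j,d}$, temperedness to control interference between levels --- and your guess at the suppressed conclusion ($\epsilon$-disjointness of $\{F_jd\}$ together with $\left|\bigcup_j F_jD_j\right|\geq\frac{\epsilon}{5}|C|$; note this forces the implicit assumption that $\bigcup_j C_j$ is essentially all of $C$) is correct. The within-level bookkeeping is also fine: at a single level the unweighted incidence count closes because there is only one value of $|F_j|$ to cancel, giving $|F_jD_j|\geq\epsilon|C_j'|$.

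The gap is in the cross-level double counting, precisely at the step you flag as ``the main technical obstacle'' and then wave through. A discarded $c\in C_j$ is charged at least $\epsilon|F_j|$ incidences, while a covered point $g$, lying in $F_{n}d$ for its highest covering level $n=n(g)$, collects at most $|F_n^{-1}g|+\left|\bigcup_{i<n}F_i^{-1}g\right|\leq 3|F_n|$ of them (this is where $2$-temperedness and the disjointness of the $C_j$ enter). These bounds carry mismatched weights, $|F_j|$ on the charging side versus $|F_{n(g)}|$ on the collecting side, so the resulting inequality compares $\sum_j|C_j|\,|F_j|$ with $\sum_n|F_nD_n|\,|F_n|$ and does not yield $|C|\lesssim\epsilon^{-1}|U|$ when the $|F_j|$ differ wildly. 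Normalizing each incidence by $1/|F_j|$ does not rescue it: the per-$g$ sum $\sum_{j\leq n}|C_j\cap F_j^{-1}g|/|F_j|$ can be of order $n$ even for $2$-tempered sequences (take $F_j=[0,2^j)$ in $\mathbb{Z}$ and $C_j=[g-2^j+1,\,g-2^{j-1}]$, which are disjoint and each meet $F_j^{-1}g$ in half of it), so ``temperedness turns this into a uniform bound'' is exactly the assertion that fails. The standard repair --- and the one the paper itself mirrors at the multiscale level in the proof of Proposition \ref{lem:Key1 Lemma} --- is to spend temperedness \emph{before} selecting rather than after: at level $j$, first delete from $C_j$ the shadow $\bigcup_{i>j}\bigcup_{k<i}F_k^{-1}F_iD_i$ of the already-selected translates, whose total size is at most $\frac{2}{1-\epsilon}|U|\leq 4|U|$ by temperedness plus $\epsilon$-disjointness, and only then run the single-level greedy count on the remainder $\tilde{C}_j$ to get $|F_jD_j|\geq\epsilon|\tilde{C}_j|$, with $F_jD_j$ now genuinely disjoint from the higher levels. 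Summing over $j$ and using disjointness of the $C_j$ gives $|U|\geq\epsilon\left(|C|-4|U|\right)$, hence $|U|\geq\frac{\epsilon}{1+4\epsilon}|C|\geq\frac{\epsilon}{5}|C|$ for $\epsilon\leq\frac{1}{2}$.
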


\begin{enumerate}
\item The collection $\left\{ F_{j}d:\,d\in D_{j},\,1\leq j\leq L\right\} $
is $\epsilon$-disjoint,
\item $\left|\bigcup_{j=1}^{L}F_{j}D_{j}\right|\geq\frac{\epsilon}{5}\left|C\right|$.
\end{enumerate}
The following proposition, which is analogous to the effective Vitali
covering argument of Kalikow and Weiss \cite{key-1}, will be used
as a key step through.
\begin{prop}
\label{lem:Key1 Lemma} For any $\epsilon>0$, once $\lambda>0$ is
small enough and $q\in\mathbb{N}$ is large enough, the following
holds for any $\lambda$-good Følner sequence $(F_{n})$:

Let $C\subset G$ be a finite subset, and suppose that for each $c\in C$
there is associated a subsequence of $(F_{n}c)$ of length $q$: 
\[
F_{n_{1}(c)}c,...,F_{n_{q}(c)}c,\qquad n_{1}(c)<...<n_{q}(c).
\]
 Then there exists an $\epsilon$-disjoint collection $\{F_{n(d)}d\}_{d\in D}$
where $D\subset C$ and $n(d)\in\left\{ n_{1}(d),...,n_{q}(d)\right\} $
, which satisfies at least one of the following properties:

1. Either $\left|\bigcup_{d\in D}F_{n(d)}d\right|\geq2|C|$,

2. or $\left|\bigcup_{d\in D}F_{n(d)}d\cap C\right|\geq(1-\epsilon)|C|$.
\end{prop}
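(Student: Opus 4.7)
The plan is to construct $D$ and the scale assignment $n(\cdot)$ by a Vitali-style greedy procedure that processes absolute scales in decreasing order and invokes the $\epsilon$-disjointification lemma at each scale. I would choose $q$ of order $\epsilon^{-2}$ (large enough that the counting below suffices) and $\lambda$ sufficiently small in terms of $\epsilon$ and $q$, so that boundary losses from $\lambda$-goodness can be absorbed. Initialize $U = C$, $V = \emptyset$ (the running covered set), and $D = \emptyset$. Enumerate the distinct absolute scales $m_{1} > m_{2} > \ldots$ that appear among $\{n_{1}(c),\ldots,n_{q}(c)\}$ for some $c \in C$ and process them in this order.

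At each scale $m$, first test the stopping conditions: stop with conclusion~2 if $|U| < \epsilon|C|$ (so that $|C \cap V| > (1-\epsilon)|C|$), and stop with conclusion~1 if $|V| \geq 2|C|$. If neither holds, set $C_{m} = \{c \in U : m \in \{n_{1}(c),\ldots,n_{q}(c)\}\}$ and filter to $C_{m}^{\mathrm{good}} = \{c \in C_{m} : F_{m} c \cap V = \emptyset\}$; then apply the $\epsilon$-disjointification lemma to $\{F_{m} c : c \in C_{m}^{\mathrm{good}}\}$, obtaining $D_{m} \subset C_{m}^{\mathrm{good}}$ with $\{F_{m} d : d \in D_{m}\}$ being $\epsilon$-disjoint within the step and automatically disjoint from $V$ by the filtering. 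Append these selections to $D$ with $n(d) = m$, and update $V$ and $U$. For the counting: every $c$ never covered contributes to $C_{m}$ for each of its $q$ scale options, so $\sum_{m} |C_{m}| \geq q|U_{\mathrm{final}}|$; if conclusion~2 is not attained, $|U_{\mathrm{final}}| \geq \epsilon|C|$, giving $\sum_{m}|C_{m}| \geq q\epsilon|C|$. Hence $\sum_{m}(\epsilon/5)|C_{m}^{\mathrm{good}}|$ lower bounds the $\epsilon$-disjoint mass accumulated in $V$, which, once $q \gtrsim \epsilon^{-2}$ and filtering losses are controlled (see below), exceeds $2|C|$, contradicting the assumption that conclusion~1's stopping condition never triggered.

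The main obstacle is controlling the filtering loss and ensuring the combined collection is $\epsilon$-disjoint across steps; both rely on $\lambda$-goodness Property~(i) of Definition~\ref{def:inv. cond.}. Since scales are processed in decreasing order, every previously selected $F_{m'} d'$ has $m' > m$; thus $F_{m} c \cap F_{m'} d' \neq \emptyset$ with $c \notin F_{m'} d'$ forces $c \in (F_{m}^{-1} F_{m'} \setminus F_{m'}) d'$, a boundary set of size $\leq \lambda|F_{m'}|$. Summing over previous selections, and using the $\epsilon$-disjointness of $V$ so that $\sum |F_{m'}| \leq |V|/(1-\epsilon) \leq 2|C|/(1-\epsilon)$, the number of filtered-out $c$'s per step is at most $2\lambda|C|/(1-\epsilon)$. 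Choosing $\lambda$ small enough in terms of $\epsilon$ and $q$ keeps the cumulative filtering loss below a small fraction of $q\epsilon|C|$, validating the counting above. Meanwhile, by the filtering step itself, each new $F_{m} d$ is disjoint from $V$, and combined with the within-step $\epsilon$-disjointness from the lemma this yields overall $\epsilon$-disjointness of $\{F_{n(d)} d : d \in D\}$. The delicate point I expect to require the most care is precisely the bookkeeping of this cumulative loss (making sure ``$\lambda$ small in terms of $\epsilon,q$'' really does suffice uniformly in $|C|$), since the total number of processed scales can in principle depend on $|C|$; here one argues that scales with $|C_{m}|$ below a threshold of order $\epsilon|C|/q^{2}$ can be skipped, bounding the number of effectively contributing steps by a quantity depending only on $\epsilon$ and $q$.
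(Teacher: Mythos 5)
Your overall strategy --- process the absolute scales in decreasing order, apply the $\epsilon$-disjointification lemma at each scale to the not-yet-excluded elements, and close with the $q$-fold counting $\sum_{m}|C_{m}|\geq q|U_{\mathrm{final}}|\geq q\epsilon|C|$ played against the standing assumption $|V|<2|C|$ --- is exactly the paper's. The genuine gap is in the step you yourself flag as delicate: controlling the cumulative filtering loss $\sum_{m}|C_{m}\setminus C_{m}^{\mathrm{good}}|$. Your per-step bound $2\lambda|C|/(1-\epsilon)$ must be summed over all processed scales, whose number is not bounded in terms of $\epsilon$ and $q$, and the patch you propose --- skipping scales with $|C_{m}|<\epsilon|C|/q^{2}$ --- destroys the counting on the other side: $\sum_{\mathrm{skipped}}|C_{m}|$ admits no bound better than the trivial $q|C|$, since arbitrarily many scales can each sit below the threshold. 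Concretely, if each $c\in C$ carries $q$ scales private to itself and $|C|>q^{2}/\epsilon$, then every scale is skipped, nothing is ever selected, and neither conclusion holds.

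The missing idea is to make the exclusion monotone in the scale, which is what the paper does: once $F_{m'}d'$ is selected, one permanently removes the \emph{entire} set $\bigcup_{i<m'}F_{i}^{-1}F_{m'}d'$ from the pool, rather than removing, at each later scale $m$ separately, only those $c$ with $F_{m}c\cap F_{m'}d'\neq\emptyset$. An element is then ever lost only if it lies in $V\cup\mathrm{Bad}$, where $\mathrm{Bad}=\bigcup_{(d',m')}\bigl(\bigcup_{i<m'}F_{i}^{-1}F_{m'}\setminus F_{m'}\bigr)d'$; by condition \ref{enu:For-any-,-1} of Definition \ref{def:inv. cond.} together with the $\epsilon$-disjointness of the selected collection, $|\mathrm{Bad}|\leq\lambda\sum|F_{m'}|\leq\frac{\lambda}{1-\epsilon}|V|\leq\frac{2\lambda}{1-\epsilon}|C|$ --- a single bound for the whole run, independent of the number of steps. (Alternatively, you may keep your per-scale filter, but then observe that every filtered $c$ must lie in $\mathrm{Bad}$ --- it cannot lie in $V$ since it is still uncovered --- and is filtered at most $q$ times, once per scale it carries; this gives $\sum_{m}|C_{m}\setminus C_{m}^{\mathrm{good}}|\leq q|\mathrm{Bad}|\leq\frac{2q\lambda}{1-\epsilon}|C|$, which is at most $\frac{q\epsilon}{2}|C|$ once $\lambda\leq\epsilon(1-\epsilon)/4$.) With either repair the rest of your argument goes through and coincides with the paper's proof, including the case split between conclusion 1 and conclusion 2.
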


As it can be seen from the proof below, for $\left(F_{n}\right)$
to satisfy the conclusion, one can assume that $\left(F_{n}\right)$
is a Følner sequence that merely admits property (i) of being $\lambda$-good
(Definition \ref{def:inv. cond.}). 
\begin{proof}
Define
\[
\mathscr{C}=\left\{ (c,n_{i}(c)):c\in C,1\leq i\leq q\right\} 
\]
let $m=\max\{n:\,\exists c\in G,\,(c,n)\in\mathscr{C}\}$, and consider
the $m$-section of $\mathscr{\ensuremath{C}}$: 
\[
C_{m}=\{c:(c,m)\in\mathscr{C}\}
\]
Assuming $\lambda\leq1$, the $\epsilon$-disjointification lemma
guarantees there is a subset $D_{m}\subset C_{m}$, s.t.

(a) The collection $\{F_{m}d\}_{d\in D_{m}}$ is $\epsilon$-disjoint,
and 

(b) $|F_{m}D_{m}|\geq\frac{\epsilon}{5}\left|C_{m}\right|$.

Let $1\leq k\leq m-1$, and suppose we have already defined subsets
$\left(D_{m-i}\right)_{i=0}^{k-1}$ of $C$. Denote:
\begin{align}
W_{m-k+1} & =C\backslash\,\bigcup_{n=m-k+1}^{m}\,\bigcup_{i<n}F_{i}^{-1}F_{n}D_{n}\label{eq:-11}\\
C_{m-k} & =\left\{ c\in W_{m-k+1}:\,(c,m-k)\in\mathscr{C}\right\} \label{eq:-12}
\end{align}
and use again the $\epsilon$-disjointification lemma to take some
$D_{m-k}\subset C_{m-k}$ so that:

(a)' The collection $\{F_{m-k}d\}_{d\in D_{m-k}}$ is $\epsilon$-disjoint,
and

(b)' $|F_{m-k}D_{m-k}|\geq\frac{\epsilon}{5}\left|C_{m-k}\right|$.

The restriction $C_{m-k}\subset W_{m-k+1}$ (\ref{eq:-12}) together
with (\ref{eq:-11}) guarantees that 
\[
\bigcup_{n=m-k+1}^{m}F_{n}D_{n}\cap F_{m-k}D_{m-k}=\emptyset.
\]

We end up (after $m$ steps) with a pairwise disjoint subsets $D_{1}\subset C_{1},...,D_{m}\subset C_{m}$
where $\bigsqcup_{n=1}^{m}C_{n}\times\{n\}\subset\mathscr{C}$, and
s.t. each $\{F_{n}d\}_{d\in D_{n}}$ is $\epsilon-$disjoint, the
unions $\bigcup_{d\in D_{n}}F_{n}d=F_{n}D_{n}$ are disjoint to each
other and are of size $\left|F_{n}D_{n}\right|\geq\frac{\epsilon}{5}\left|C_{n}\right|$.
Let $\mathscr{D}=\bigsqcup_{n=1}^{m}D_{n}\times\{n\}$. We claim that
the collection $\{F_{n}d\}_{(d,n)\in\mathscr{D}}$ satisfies the conclusion
of the Lemma: We just pointed out that it is indeed an $\epsilon$-disjoint
collection. Suppose it doesn't satisfy property 2 of the conclusion,
that is,
\begin{equation}
\left|C\backslash\bigcup_{k=1}^{m}F_{k}D_{k}\right|\geq\epsilon\left|C\right|.\label{eq:}
\end{equation}
We distinguish between two cases: 

I. One has:
\[
2\lambda\left|\bigcup_{k=1}^{m}F_{k}D_{k}\right|\geq\frac{1}{2}\left|C\backslash\bigcup_{k=1}^{m}F_{k}D_{k}\right|
\]
then, together with (\ref{eq:}) one get:
\[
\left|\bigcup_{k=1}^{m}F_{k}D_{k}\right|\geq\frac{1}{4\lambda}\left|C\backslash\bigcup_{k=1}^{m}F_{k}D_{k}\right|\geq\frac{\epsilon}{4\lambda}\left|C\right|
\]
and for small enough $\lambda$ ($\lambda\leq\frac{\epsilon}{8}$),
the last inequality gives property 1 in the conclusion, so we're done.

II. For the other case,
\begin{equation}
2\lambda\left|\bigcup_{k=1}^{m}F_{k}D_{k}\right|<\frac{1}{2}\left|C\backslash\bigcup_{k=1}^{m}F_{k}D_{k}\right|\label{eq:-1}
\end{equation}
we bound from below the size of $W_{2}=C\backslash\,\bigcup_{n=2}^{m}\,\bigcup_{i<n}F_{i}^{-1}F_{n}D_{n}$
:
\begin{align*}
\left|W_{2}\right| & \geq\left|C\backslash\bigcup_{k=1}^{m}F_{k}D_{k}\right|-\left|\bigcup_{(d,n)\in\mathscr{D}}\,\bigcup_{i<n}F_{i}^{-1}F_{n}d\,\backslash\,F_{n}d\right|\\
 & \geq\left|C\backslash\bigcup_{k=1}^{m}F_{k}D_{k}\right|-\lambda\sum_{(d,n)\in\mathscr{D}}\left|F_{n}d\right|\\
 & \geq\left|C\backslash\bigcup_{k=1}^{m}F_{k}D_{k}\right|-\frac{\lambda}{1-\epsilon}\left|\bigcup_{k=1}^{m}F_{k}D_{k}\right|\\
 & \geq\frac{1}{2}\left|C\backslash\bigcup_{k=1}^{m}F_{k}D_{k}\right|
\end{align*}
(the second inequality follows from property \ref{enu:For-any-,-1}
of Definition \ref{def:inv. cond.}, the third by the $\epsilon$-disjointness
of the collection, and the last one by (\ref{eq:-1}), together with
the assumption $\epsilon\leq\frac{1}{2}$ ). Any element in $W_{2}$
appears as the left coordinate of $q$ different elements in $\bigcup_{k=1}^{m}C_{k}\times\{k\}$,
thus,
\begin{align*}
\left|\bigcup_{(d,n)\in\mathscr{D}}F_{n}d\right| & =\sum_{k=1}^{m}\left|F_{k}D_{k}\right|\\
 & \geq\frac{\epsilon}{5}\sum_{k=1}^{m}\left|C_{k}\right|\\
 & =\frac{\epsilon}{5}\left|\bigcup_{k=1}^{m}C_{k}\times\{k\}\right|\\
 & \geq\frac{\epsilon}{5}q\left|W_{2}\right|\\
 & \geq\frac{\epsilon}{10}q\left|C\backslash\bigcup_{k=1}^{m}F_{k}D_{k}\right|\\
 & \geq\frac{\epsilon^{2}}{10}q\left|C\right|
\end{align*}
assuming $q\geq\frac{20}{\epsilon^{2}}$, the Lemma is proved.
\end{proof}
\textbf{Proof of Theorem \ref{thm:For-any-}:} For any $x\in X$,
the number of fluctuations of $\mathsf{A}_{n}f(x)$ across $(\alpha,\beta)$
is equal to the number of fluctuations of $\mathsf{A}_{n}[f+||f||_{\infty}](x)$
across $(\alpha+||f||_{\infty},\beta+||f||_{\infty})$. Consequently,
for any $N$,
\[
D_{(F_{n}),f,\alpha,\beta,N}=D_{(F_{n}),f+||f||_{\infty},\alpha+||f||_{\infty},\beta+||f||_{\infty},N}.
\]
 Notice that $||f+||f||_{\infty}||_{\infty}\leq2||f||_{\infty}$,
and besides trivial cases, one has $0<\alpha+||f||_{\infty}$. Hence,
for any $S>0$ and $\alpha<\beta$, any estimate of $\mu\left(D_{N}\right)$,
where $D_{N}$ is defined w.r.t. any non negative function $0\leq f\leq2S$
and the gap $[\alpha+S,\beta+S]\subset(0,\infty)$, is an estimate
of $\mu\left(D_{N}\right)$, where $D_{N}$ is defined w.r.t. any
function $||f||_{\infty}\leq S$ and the gap $[\alpha,\beta]\subset\mathbb{R}$.
Thus from now on, we shall assume $0\leq f\leq S$ and $0<\alpha<\beta$.

Fix $x\in X$, $M\in\mathbb{N}$, and let $\Omega\subset G$ be a
set which is sufficiently invariant w.r.t. $\bigcup_{n=1}^{M}F_{n}$,
so that the set
\[
B=\left\{ g\in\Omega:\:\bigcup_{n=1}^{M}F_{n}g\subset\Omega\right\} 
\]
has size close to $|\Omega|$. We will give an upper bound to the
relative density $\frac{|C|}{|\Omega|}$, where
\[
C=C_{x,M}=\left\{ c\in B:\,cx\in D_{N,M}\right\} 
\]
 This upper bound won't depend on $x$ or $M$, and thus by the transference
principle, it will give an upper bound for $\mu\left(D_{N}\right)$,
as it is shown at the end of the proof. 

Take 
\[
\delta=\min\left\{ \frac{1}{2}\left(\frac{\beta}{\alpha}-1\right),\frac{1}{2}\right\} ,
\]
and choose $\frac{1}{4}>\epsilon>0$ small enough so that the following
three inequalities hold:
\begin{align}
\frac{(\beta-4\epsilon S)\left(1-\epsilon\right)}{\alpha} & \geq1+\delta>1\label{eq:-3}\\
(1-\epsilon)(1+\delta) & \geq(1+\delta/2)\label{eq:-2}\\
(1-\epsilon) & \geq(1+\delta/2)^{-1}\label{eq:-4}
\end{align}
Take $q\in\mathbb{N}$ and $0<\lambda\leq\epsilon/2$ so that the
conclusion of Lemma \ref{lem:Key1 Lemma} will take place with $\epsilon/2$.

The first step is to replace $C$ with a union of $\epsilon$-disjoint
collections of size not much less than $|C|$, where for each set
in the collection, the average of $f$ at $x$ on it is above $\beta$.
For that, use the first group of $q$ fluctuations to find for each
$c\in C$ an increasing sequence $n_{1}(c)<,...,<n_{q}(c)$ s.t. $\mathsf{A}_{n_{i}(c)}f(cx)\geq\beta$
for each $1\leq i\leq q$. Then, by applying Proposition \ref{lem:Key1 Lemma},
one take an $\epsilon$-disjoint collection $\left(F_{n}c\right)_{(c,n)\in\mathscr{B}_{1}}$,
where its union $C_{1}=\bigcup_{(c,n)\in\mathscr{B}_{1}}F_{n}c$ is
in $\Omega$ and of size $\left|C_{1}\right|\geq(1-\epsilon)\left|C\right|$.
The next step will be done recursively $(\lfloor\frac{N}{2q}\rfloor-1)$
times, thus we introduce it in a more general form:
\begin{lem}
\label{lem:inside proof}Let $f,\left(F_{n}\right),C,\alpha,\beta,\delta,\epsilon,N$
and $q$ be as above. Let $N_{k}\leq N-2q$, and suppose that $\mathscr{B}_{k}\subset C\times\mathbb{N}$
is a collection of tuples s.t. :

(i) For each $(c,n)\in\mathscr{B}_{k}$ the average $\mathsf{A}_{n}f(cx)$
is one of $cx$'s first $N_{k}$ upcrossings to above $\beta$.

(ii) the collection $\left(F_{n}c\right)_{(c,n)\in\mathscr{B}_{k}}$
is $\epsilon$-disjoint.

Then there exists a collection $\mathscr{B}_{k+1}\subset C\times\mathbb{N}$
of tuples s.t. :

(i) For each $(c,n)\in\mathscr{B}_{k+1}$, the average $\mathsf{A}_{n}f(cx)$
is one of $cx$'s first $N_{k}+2q$ upcrossings to above $\beta$.

(ii) The collection $\left(F_{n}c\right)_{(c,n)\in\mathscr{B}_{k+1}}$
is $\epsilon$-disjoint.

(iii) $\left|\bigcup_{(c,n)\in\mathscr{B}_{k+1}}F_{n}c\right|\geq(1+\delta/2)\left|\bigcup_{(c,n)\in\mathscr{B}_{k}}F_{n}c\right|$.
\end{lem}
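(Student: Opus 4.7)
My plan is to construct $\mathscr{B}_{k+1}$ by harvesting fresh fluctuation data from each trajectory $(\mathsf{A}_n f(cx))_n$ at positions beyond the $N_k$-th upcrossing, extracting an $\epsilon$-disjoint upcrossing family via the Vitali-type Proposition \ref{lem:Key1 Lemma}, and forcing the growth bound (iii) via a mass-balance comparison with a parallel downcrossing family drawn from the same window of positions.

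For each $c\in C$ the trajectory has at least $N\geq N_k+2q$ fluctuations across $(\alpha,\beta)$, so within the window from the $(N_k+1)$-st to the $(N_k+2q)$-th upcrossing I can select $q$ upcrossing indices $u_1(c)<\cdots<u_q(c)$ with $\mathsf{A}_{u_i(c)} f(cx)\geq\beta$, together with $q$ interleaved downcrossing indices $d_1(c)<\cdots<d_q(c)$ with $\mathsf{A}_{d_i(c)} f(cx)\leq\alpha$. Choosing $\lambda$ small enough and $q$ large enough so that Proposition \ref{lem:Key1 Lemma} applies with parameter $\epsilon/2$, I apply it separately to the two subsequence families, obtaining $\epsilon/2$-disjoint unions $W$ (upcrossings) and $V$ (downcrossings), each satisfying one of the two Key1 alternatives (either the union has size $\geq 2|C|$, or its intersection with $C$ has size $\geq(1-\epsilon/2)|C|$). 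I then declare $\mathscr{B}_{k+1}:=\{(c,u(c)):c\in D_W\}$, so that $U_{k+1}=W$; properties (i) and (ii) of the conclusion hold automatically, since each $u(c)$ lies among the first $N_k+2q$ upcrossings of $cx$ and $\epsilon/2$-disjointness upgrades to $\epsilon$-disjointness.

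The crux is property (iii), $|W|\geq(1+\delta/2)|U_k|$. I plan to run two parallel mass estimates: by $\epsilon$-disjointness of $\mathscr{B}_k$ together with the $\beta$-bound on its averages,
\[
\sum_{g\in U_k}f(gx)\;\geq\;(\beta-\epsilon S)\,|U_k|,
\]
and by $\epsilon/2$-disjointness of $V$ together with the $\alpha$-bound on its averages,
\[
\sum_{g\in V}f(gx)\;\leq\;\frac{\alpha\,|V|}{1-\epsilon/2}.
\]
Assuming toward a contradiction that $|W|<(1+\delta/2)|U_k|$, the two Key1 alternatives for $V$, combined with the inequalities (\ref{eq:-3})--(\ref{eq:-4}) that were engineered precisely so that $(\beta-\epsilon S)(1-\epsilon/2)/\alpha\geq 1+\delta$, should close the case provided one knows that $V$ overlaps $U_k$ substantially. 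Establishing this overlap is the main obstacle I foresee: it is where the $\lambda$-good hypothesis on $(F_n)$ must enter in earnest, via the near-invariance bound $|F_n\setminus F_n g|<\lambda|F_n|$ for $g\in F_i$ with $i<n$, which permits a direct geometric comparison between the fresh downcrossing translates $F_{d(c)}c$ making up $V$ and the earlier upcrossing translates $F_n c$ making up $U_k$, and rules out the degenerate scenario in which $V\cap U_k$ is small.
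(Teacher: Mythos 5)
Your overall shape (mass balance between an upcrossing family and a downcrossing family, engineered via inequalities (\ref{eq:-3})--(\ref{eq:-4})) matches the paper, but there is a genuine gap at exactly the point you flag, and your proposed fix does not work. You apply Proposition \ref{lem:Key1 Lemma} with base set $C$, so its two alternatives only tell you that $|W|\geq 2|C|$ or $|W\cap C|\geq(1-\epsilon/2)|C|$ (and similarly for $V$). Since $|U_k|$ grows geometrically and soon dwarfs $|C|$, neither alternative gives any relation between $V$ (or $W$) and $U_k$, which is what (iii) requires. Moreover, the ``direct geometric comparison'' you propose between $F_{d(c)}c$ and $F_{n}c$ via $\lambda$-goodness is unfounded: condition (\ref{enu:For-any-}) of Definition \ref{def:inv. cond.} compares $F_n$ with $F_n f$ for $f\in F_i$, $i<n$; it says nothing about $F_n c$ versus $F_d c$ for two different indices $n<d$ and the same $c$, and indeed these two sets need not intersect at all.

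The missing idea is to run the covering over the current union $C_k=\bigcup_{(c,n)\in\mathscr{B}_k}F_nc$ rather than over $C$. To each point $g\in C_k$ one picks an anchor $c(g)$ with $g\in F_nc(g)$ and assigns the windows $F_{n_1(c(g))}g,\dots,F_{n_q(c(g))}g$ \emph{centered at $g$}, where the $n_i(c(g))$ are the next $q$ downcrossing indices of $c(g)x$. Proposition \ref{lem:Key1 Lemma} applied to $C_k$ then yields a selected union $C_k'$ with either $|C_k'|\geq 2|C_k|$ or $|C_k'\cap C_k|\geq(1-\epsilon/2)|C_k|$ --- this is precisely the overlap with $U_k=C_k$ that your argument lacks. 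Only now does $\lambda$-goodness enter: since $g=hc(g)$ with $h\in F_n$ for some $n<n_i$, condition (\ref{enu:For-any-}) gives $|F_{n_i}g\,\triangle\,F_{n_i}c(g)|<\lambda|F_{n_i}|$, so each selected window can be recentered from $g$ to its anchor $c(g)$, turning it into a genuine downcrossing window $F_{n_i}c(g)$ (where $\mathsf{A}_{n_i}f(c(g)x)\leq\alpha$) while preserving $\epsilon$-disjointness and losing only an $\epsilon$-fraction of mass. The mass balance on $C_k$ versus the recentered union $C_k''$ then gives $|C_k''|\geq(1+\delta)|C_k|$, and a second, identical covering-plus-recentering pass over $C_k''$ using the next $q$ upcrossings produces $\mathscr{B}_{k+1}$ with $|C_{k+1}|\geq(1-\epsilon)|C_k''|\geq(1+\delta/2)|C_k|$. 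Note also that your $\mathscr{B}_{k+1}$ assigns one window per $c\in D_W\subset C$, whereas the correct construction naturally produces several windows per anchor $c$; this is harmless for (i) and (ii) but is a symptom of having run the covering over the wrong set.
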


\textbf{Proof of Lemma \ref{lem:inside proof}.} Denote $C_{k}=\bigcup_{(c,n)\in\mathscr{B}_{k}}F_{n}c$.
To each $g\in C_{k}$ we will associate a subsequence of $\left(F_{n}g\right)$
of length $q$, in order to apply Lemma \ref{lem:Key1 Lemma} to the
set $C_{k}$: For any $g\in C_{k}$, choose some $c=c(g)$ so that
$(c,n)\in\mathscr{B}_{k}$ for some $n$ and $g\in F_{n}c$. Associate
to $g$ the indices of the next $q$ downcrossings to below $\alpha$
of $c$, $n<n_{1}(c)<...<n_{q}(c)$. By Proposition \ref{lem:Key1 Lemma},
there is an $\nicefrac{\epsilon}{2}$-disjoint collection $\left(F_{n}g\right)_{(g,n)\in\mathscr{B}_{k}'}$
, with union $C_{k}'=\bigcup_{(g,n)\in\mathscr{B}_{k}'}F_{n}g\subset\Omega$
that satisfies one of the two options in the conclusion of Proposition
\ref{lem:Key1 Lemma}. Next, we define another index set $\mathscr{B}_{k}''$
to be
\[
\mathscr{B}_{k}''=\left\{ (c,n):\,\exists(g,n)\in\mathscr{B}_{k}',\,c(g)=c\right\} 
\]
and the union of its associated collection
\[
C_{k}''=\bigcup_{(c,n)\in\mathscr{B}_{k}''}F_{n}c.
\]
For any $(c,n)\in\mathscr{B}_{k}''$, let $(g,n)\in\mathscr{B}_{k}'$
be such that $c(g)=c$ . Then, $\left(F_{n}\right)$ being $\lambda$-good,
by \ref{enu:For-any-} of Definition \ref{def:inv. cond.}, 
\[
\left|F_{n}g\triangle F_{n}c(g)\right|<\lambda\left|F_{n}\right|\leq\epsilon/2\left|F_{n}\right|.
\]
 That, together with $\left(F_{n}g\right)_{(g,n)\in\mathscr{B}_{k}'}$
being $\epsilon/2$-disjoint, implies that 
\begin{equation}
\left(F_{n}c\right)_{(c,n)\in\mathscr{B}_{k}''}\text{ is \ensuremath{\epsilon}-disjoint}\label{eq:-6}
\end{equation}
and that
\begin{align}
|C_{k}''\cap C_{k}'| & \geq\sum_{(g,n)\in\mathscr{B}_{k}'}\left((1-\epsilon/2)|F_{n}g|-|F_{n}g\backslash F_{n}c(g)|\right)\label{eq:-5}\\
 & \geq(1-\epsilon)\sum_{(g,n)\in\mathscr{B}_{k}'}|F_{n}g|\nonumber \\
 & \geq(1-\epsilon)|C_{k}'|\nonumber 
\end{align}

This relation together with $C_{k}'$ being as in the conclusion of
Proposition \ref{lem:Key1 Lemma}, gives one of the following two
options:

1. Either $\left|C_{k}'\right|\geq2|C_{k}|$, in which case (\ref{eq:-5})
implies that 
\begin{equation}
|C_{k}''|\geq2(1-\epsilon)|C_{k}|\label{eq:-8}
\end{equation}
,

2. or $\left|C_{k}'\right|<2|C_{k}|$, but$\left|C_{k}'\bigcap C_{k}\right|\geq(1-\epsilon/2)|C_{k}|$,
which implies
\begin{align}
|C_{k}''\cap C_{k}| & \geq|C_{k}'\cap C_{k}|-|C_{k}'\backslash C_{k}''|\label{eq:-9}\\
 & \geq(1-\epsilon/2)|C_{k}|-\epsilon|C_{k}'|\nonumber \\
 & \geq(1-\epsilon/2)|C_{k}|-2\epsilon|C_{k}|\nonumber \\
 & >(1-3\epsilon)|C_{k}|\nonumber 
\end{align}

In both cases one can conclude that $\left|C_{k}''\right|\geq(1+\delta)|C_{k}|$:
for the first case (\ref{eq:-8}), $\epsilon<\frac{1}{4}$ and $\delta\leq\frac{1}{2}$
gives
\[
2(1-\epsilon)\geq1.5\geq1+\delta.
\]
 For the second case (\ref{eq:-9}), this can be observed by the next
calculation:

By (\ref{eq:-6}), there are pairwise disjoint sets $E''_{(n,c)}\subset F_{n}c$
(for each $(n,c)\in\mathscr{B}_{k}''$), with $\left|E''_{(n,c)}\right|\geq(1-\epsilon)\left|F_{n}c\right|$.
Thus
\begin{align*}
\sum_{g\in C_{k}''}f(gx) & \leq\sum_{(c,n)\in\mathscr{B}_{k}''}\,\sum_{g\in F_{n}c}f(gx)\\
 & \leq\left(\sum_{\mathscr{B}_{k}''}\left|F_{n}c\right|\right)\alpha\\
 & \leq\frac{1}{1-\epsilon}\left(\sum_{\mathscr{B}_{k}''}\left|E''_{(n,c)}\right|\right)\alpha\\
 & \leq\frac{1}{1-\epsilon}\left|C_{k}''\right|\alpha
\end{align*}
On the other hand, the collection $\left(F_{n}c\right)_{(c,n)\in\mathscr{B}_{k}}$
is $\nicefrac{\epsilon}{2}$-disjoint, and so, there are pairwise
disjoint sets $E_{(n,c)}\subset F_{n}c$ (for each $(n,c)\in\mathscr{B}_{k}$),
with $\left|E_{(n,c)}\right|\geq(1-\nicefrac{\epsilon}{2})\left|F_{n}c\right|$.
Thus
\begin{align*}
\sum_{g\in C_{k}}f(gx) & \geq\sum_{(c,n)\in\mathscr{B}_{k}}\sum_{g\in E_{(n,c)}}f(gx)\\
 & =\sum_{\mathscr{B}_{k}}\left(\sum_{F_{n}c}f(gx)-\sum_{F_{n}c\backslash E_{(n,c)}}f(gx)\right)\\
 & \geq\sum_{\mathscr{B}_{k}}\left|F_{n}c\right|\left(\beta-\frac{\epsilon}{2}S\right)\\
 & \geq\left|C_{k}\right|\left(\beta-\frac{\epsilon}{2}S\right)
\end{align*}
if $\left|C_{k}''\bigcap C_{k}\right|\geq(1-3\epsilon)|C_{k}|$ as
in (\ref{eq:-9}), then:
\begin{align*}
\left|C_{k}\right|\left(\beta-\frac{\epsilon}{2}S\right) & \leq\sum_{g\in C_{k}}f(gx)\\
 & \leq\left|C_{k}\right|3\epsilon S+\sum_{g\in C_{k}''}f(gx)\\
 & \leq\left|C_{k}\right|3\epsilon S+\frac{1}{1-\epsilon}\left|C_{k}''\right|\alpha
\end{align*}
 Thus, with our choice of $\epsilon$ w.r.t. $\delta$ (\ref{eq:-3}),
we get that:
\[
\left|C_{k}''\right|\geq(1+\delta)|C_{k}|.
\]

In the same manner we constructed $\mathscr{B}_{k}''$, we use the
next $q$ upcrossings to above $\beta$ to construct a collection
$\mathscr{B}_{k+1}$ s.t. $\left(F_{n}c\right)_{\mathscr{B}_{k+1}}$
is an $\epsilon$-disjoint collection of upcrossings, with union $C_{k+1}=\bigcup_{\mathscr{B}_{k+1}}F_{n}g$
in $\Omega$ that satisfies one of the two options in the conclusion
of Proposition \ref{lem:Key1 Lemma}. In particular, we have:
\begin{align*}
\left|C_{k+1}\right| & \geq(1-\epsilon)\left|C_{k}''\right|\\
 & \geq(1-\epsilon)(1+\delta)\left|C_{k}\right|\\
 & \geq(1+\delta/2)\left|C_{k}\right|
\end{align*}

(the last inequality follows from the assumption $(1-\epsilon)(1+\delta)\geq(1+\delta/2)$),
and Lemma \ref{lem:inside proof} is proved.$\hfill\square$

Back to the proof of Theorem \ref{thm:For-any-}, from Lamma \ref{lem:inside proof}
it follows that there exist finite subsets of $\Omega,$ $C_{1},...,C_{\lfloor\frac{N}{2q}\rfloor}$
s.t. 
\begin{align*}
\left|\Omega\right| & \geq\left|C_{\lfloor\frac{N}{2q}\rfloor}\right|\geq(1+\delta/2)^{\lfloor\frac{N}{2q}\rfloor-1}\left|C_{1}\right|\\
 & \geq(1+\delta/2)^{\lfloor\frac{N}{2q}\rfloor-1}(1-\epsilon)\left|C\right|\\
 & \geq(1+\delta/2)^{\frac{N}{2q}-3}\left|C\right|
\end{align*}
(the last inequality follows partialy from the assumption $(1-\epsilon)\geq(1+\delta/2)^{-1}$).
Since
\[
\mu(D_{N,M})=\frac{1}{|\Omega|}\int\sum_{g\in\Omega}\mathbf{1}_{D_{N,M}}(gx)d\mu(x)\leq\int\frac{\left|C_{x,M}\right|}{\left|\Omega\right|}d\mu(x)+(1-\frac{\left|B\right|}{\left|\Omega\right|})
\]
 where $(1-\frac{\left|B\right|}{\left|\Omega\right|})$ can be made
arbitrarily small (by taking $\Omega$ to be arbitrarily invariant),
one have
\[
\mu(D_{N,M})\leq\int\frac{\left|C_{x,M}\right|}{\left|\Omega\right|}d\mu(x)\leq(1+\delta/2)^{-\left(\frac{N}{2q}-3\right)}
\]
Thus the claim of the theorem takes place with $c_{0}=(1+\delta/2)^{-\frac{1}{2q}},c_{1}=(1+\delta/2)^{3}$.$\hfill\square$

\lyxaddress{uriel.gabor@gmail.com}

\lyxaddress{Einstein Institute of Mathematics}

\lyxaddress{The Hebrew University of Jerusalem}

\lyxaddress{Edmond J. Safra Campus, Jerusalem, 91904, Israel}
\end{document}